\documentclass{article}
\usepackage[latin1]{inputenc}
\usepackage[T1]{fontenc}
\usepackage{amsmath,amssymb}
\usepackage{amsfonts,amsthm}
\usepackage{bbm}
\usepackage{mathrsfs}

\usepackage{geometry}
\usepackage{color}
   \definecolor{cites}{rgb}{0.75 , 0.00 , 0.00}  
   \definecolor{urls} {rgb}{0.00 , 0.00 , 1.00}  
   \definecolor{links}{rgb}{0.00 , 0.00 , 0.5}   
  \definecolor{gray}{rgb}{0.5,0.5,.5}
\usepackage{graphicx}
\usepackage{cancel}
\usepackage[plainpages=false,colorlinks=true, citecolor=cites, urlcolor=urls, linkcolor=links, breaklinks=true]{hyperref}

\allowdisplaybreaks

\newcommand{\C}{\mathbb{C}}

\newcommand{\N}{\mathbb{N}}

\newcommand{\1}{\mathbbm{1}}

\newcommand{\Ac}{\mathcal{A}}
\newcommand{\Bc}{\mathcal{B}}
\newcommand{\Kc}{\mathcal{K}}
\newcommand{\Lc}{\mathcal{L}}
\newcommand{\Pc}{\mathcal{P}}

\newcommand{\Tc}{\mathcal{T}}

\renewcommand{\epsilon}{\varepsilon}

\newcommand{\vertiii}[1]{{\left\vert\kern-0.25ex\left\vert\kern-0.25ex\left\vert #1
    \right\vert\kern-0.25ex\right\vert\kern-0.25ex\right\vert}}
\newcommand{\vertiiis}[1]{{\vert\kern-0.25ex\vert\kern-0.25ex\vert #1
    \vert\kern-0.25ex\vert\kern-0.25ex\vert}}
    
\newcommand\pto{
   \unitlength0.1ex
   \begin{picture}(30,15)
   \put(15,16){\makebox(0,0)[]{\tiny $\Pc$}}
   \put(15,5){\makebox(0,0)[]{$\to$}}
   \end{picture}
}

\newcommand\starto{
   \unitlength0.1ex
   \begin{picture}(30,15)
   \put(15,16){\makebox(0,0)[]{$*$}}
   \put(15,5){\makebox(0,0)[]{$\to$}}
   \end{picture}
}

\DeclareMathOperator{\BUC}{BUC}

\DeclareMathOperator{\diam}{diam}
\DeclareMathOperator{\dist}{dist}

\DeclareMathOperator{\Osc}{Osc}
\DeclareMathOperator*{\slim}{s-\lim}

\DeclareMathOperator{\supp}{supp}
\DeclareMathOperator{\BMO}{BMO}

\DeclareMathOperator{\VO}{VO}

\DeclareMathOperator{\EssCom}{EssCom}
\DeclareMathOperator{\BDO}{BDO}
\DeclareMathOperator{\BO}{BO}

\DeclareMathOperator{\Imag}{Im}

\newcommand{\from}{\colon}

\providecommand{\scpr}[2]{\left\langle #1, #2 \right\rangle}
\renewcommand{\sp}{\scpr}
\providecommand{\abs}[1]{\left\lvert#1\right\rvert}
\providecommand{\norm}[1]{\left\lVert#1\right\rVert}
\providecommand{\set}[1]{\left\{ #1\right\}}

\newtheorem{thm}{Theorem}
\newtheorem*{thm*}{Theorem}

\newtheorem{lem}[thm]{Lemma}
\newtheorem{prop}[thm]{Proposition}
\newtheorem{cor}[thm]{Corollary}
\newtheorem*{cor*}{Corollary}

\theoremstyle{definition}

\theoremstyle{remark}

\numberwithin{equation}{section}

\allowdisplaybreaks
\begin{document}
\title{\bf Essential Commutants and Characterizations of the Toeplitz Algebra}
\author{Raffael Hagger\footnote{The author has received funding from the European Union's Horizon 2020 research and innovation programme under the Marie Sk{\l}odowska-Curie grant agreement No 844451.\newline Department of Mathematics and Statistics, University of Reading, Whiteknights Campus, Reading RG6 6AX, United Kingdom, r.t.hagger@reading.ac.uk}}
\maketitle
\vspace{-0.4cm}
\begin{abstract}
In this paper we study the Toeplitz algebra, which is generated by Toeplitz operators with bounded symbols on the Fock space $F^p_{\alpha}$. We show that the Toeplitz algebra coincides with each of the algebras generated by band-dominated, sufficiently localized and weakly localized operators, respectively. Moreover, we determine its essential commutant and its essential bicommutant. For $p = 2$ these results were obtained recently by Xia. However, Xia's ideas are mostly connected to Hilbert space theory and methods which are not applicable for $p \neq 2$. Instead, we use a recent result of Fulsche to generalize Xia's theorems.

\medskip
\textbf{AMS subject classification:} Primary: 47B35; Secondary: 30H20, 47L10, 47L80, 

\medskip
\textbf{Keywords:} Toeplitz algebra, band-dominaed operators, sufficiently localized operators, weakly localized operators, Fock space, essential commutants
\end{abstract}

\section{Introduction} \label{sec:Introduction}

Even though the Toeplitz algebras on Bergman and Fock spaces are probably among the most studied operator algebras, it is a notoriously difficult problem to determine whether a given operator actually belongs to the Toeplitz algebra. Until very recently there were no satisfactory characterizations and several authors came up with (seemingly) larger algebras that are easier to work with. The most prominent examples are the algebras of band-dominated \cite{BaFu,FuHa,HaggerUnitBall,HaggerBSD,HaSe,HaVi,WuZhe}, sufficiently localized \cite{BaFu,Isralowitz,IsMiWi,XiZhe} and weakly localized operators \cite{BaFu,IsMiWi,SaZo,WuZhe,Xia2015,Xia2018}. All of these algebras include the Toeplitz algebra, but it was unknown whether they actually contain operators outside of the Toeplitz algebra. In 2015, Xia \cite{Xia2015} proved the surprising result that for $p = 2$ the $C^*$-algebra generated by weakly localized operators is equal to the closure of the set of Toeplitz operators. In other words, every operator in that $C^*$-algebra can be approximated by Toeplitz operators. No products are needed. As a consequence, all of the above mentioned algebras are actually the same (see \cite{BaFu}). This result not only provides multiple workable characterizations, but also allows to switch between different viewpoints when dealing with operators in the Toeplitz algebra.

In a subsequent paper, Xia \cite{Xia2018} showed an additional characterization using essential commutants. Xia was mainly interested in the Bergman space, but the same arguments also apply to the Fock space (see \cite{WuZhe}). In the present paper we generalize all the characterizations of the Toeplitz algebra on the Fock space to $p \neq 2$. As the arguments in \cite{WuZhe,Xia2015,Xia2018} mostly depend on Hilbert space and $C^*$-algebra techniques, we have to follow a different route here. We first prove the essential commutant characterization for band-dominated operators and then show, by using a recent result by Fulsche \cite{Fulsche}, that all the above mentioned algebras again coincide. In particular, the Toeplitz algebra can be characterized by essential commutants just like in the Hilbert space case. 

To explain this in a bit more detail, let us start with some standard definitions. Let $\C^n$ denote the usual complex coordinate space of dimension $n$, which is equipped with the Euclidean dot product $\cdot$ and the corresponding absolute value $|\cdot|$:
\[z \cdot \overline{w} := z_1\overline{w}_1 + \ldots + z_n\overline{w}_n, \quad |z| := \sqrt{z \cdot \overline{z}}.\]
For $z \in \C^n$ and $r > 0$ we use $B(z,r) := \set{w \in \C^n : |z-w| < r}$ for the open ball of radius $r$ around $z$. The characteristic function of a set $S \subseteq \C^n$ will be denoted by $\1_S$. In case $S = \C^n$, we will simply write $\1 := \1_{\C^n}$.

Let $\mathrm{d}z$ denote the Lesbesgue volume form on $\C^n$ and $\alpha > 0$. The Gaussian measure $\mu_{\alpha}$ is defined by
\[\mathrm{d}\mu_{\alpha}(z) = \left(\frac{\alpha}{\pi}\right)^n e^{-\alpha|z|^2} \, \mathrm{d}z.\]
For $p \in (1,\infty)$ we will use the notation $L^p_{\alpha} := L^p(\C^n,\mu_{\alpha p/2})$ for the usual $L^p$-space defined by the measure $\mu_{\alpha p/2}$. In other words, $f \in L^p_{\alpha}$ if and only if
\[\norm{f}_p := \left[\left(\frac{\alpha p}{2\pi}\right)^n \int_{\C^n} \abs{f(z)e^{-\frac{\alpha}{2}|z|^2}}^p \, \mathrm{d}z\right]^{\frac{1}{p}}\]
is finite. We will write $f \in L^1(\C^n)$ if $f$ is integrable with respect to the Lebesgue measure and $f \in L^{\infty}(\C^n)$ if $f$ is measurable and bounded. For $f \in L^{\infty}(\C^n)$ we will use $M_f$ to denote the corresponding multiplication operator on $L^p_{\alpha}$. The sets of bounded and compact operators acting on a Banach space $X$ will be denoted by $\Lc(X)$ and $\Kc(X)$, respectively.

The Fock space $F^p_{\alpha}$ is defined as the closed subspace of entire functions in $L^p_{\alpha}$. The inner product on the ambient space $L^2_{\alpha}$ and its restriction to $F^2_{\alpha}$ will be denoted by $\sp{\cdot}{\cdot}$. As usual, $\sp{\cdot}{\cdot}$ extends to a sesquilinear form on $F^p_{\alpha} \times F^q_{\alpha}$, where $\frac{1}{p} + \frac{1}{q} = 1$. The induced map $f \mapsto \sp{\cdot}{f}$ is an (antilinear) isomorphism between $F^q_{\alpha}$ and $(F^p_{\alpha})^*$. Similarly, of course, $(L^p_{\alpha})^* \cong L^q_{\alpha}$.

The orthogonal projection $P \from L^2_{\alpha} \to F^2_{\alpha}$ is given by
\begin{equation} \label{eq:projection}
[Pf](z) = \int_{\C^n} f(w)e^{\alpha z \cdot \overline{w}} \, \mathrm{d}\mu_{\alpha}(w).
\end{equation}
The integral operator in \eqref{eq:projection} also defines a bounded projection from $L^p_{\alpha}$ onto $F^p_{\alpha}$, which we will again denote by $P$. From \eqref{eq:projection} it easily follows that $PM_{\1_K}$ and $M_{\1_K}P$ are compact for all compact sets $K \subset \C^n$ (see \cite[Proposition 7]{FuHa}, for example). For $f \in L^{\infty}$ we define the Toeplitz operator $T_f \from F^p_{\alpha} \to F^p_{\alpha}$ by $T_fg := P(fg)$. It is clear that $T_f$ is a bounded operator with $\norm{T_f} \leq \norm{P}\norm{f}_{\infty}$. The Banach algebra generated by all Toeplitz operators with bounded symbols will be denoted by $\Tc^p$.

Let us now define the generators of the algebras we want to compare with $\Tc^p$. For $A \in \Lc(L^p_{\alpha})$ the number
\[\sup\set{\dist(K,K') : K,K' \subseteq \C^n, M_{\1_{K'}}AM_{\1_K} \neq 0} \in [0,\infty]\]
is called the \emph{propagation} or \emph{band-width} of $A$. Here, $\dist(K,K') := \inf\limits_{x \in K, \, y \in K'} |x-y|$ denotes the minimal distance between $K$ and $K'$. Operators of finite propagation are called \emph{band operators} and the set of such operators is denoted by $\BO$. The closure of $\BO$ in the operator norm topology is called $\BDO^p$ and its elements are called \emph{band-dominated}. We will write $A \in \Ac^p$ and call $A \in \Lc(F^p_{\alpha})$ again \emph{band-dominated} if $AP \in \BDO^p$ \cite{FuHa,HaSe}.

An operator $A \in \Lc(F^p_{\alpha})$ is called \emph{sufficiently localized} \cite{XiZhe} if there are constants $C > 0$, $\beta > 2n$ such that
\[\sp{Ak_z}{k_w} \leq \frac{C}{(1+|z-w|)^{\beta}}\]
for all $w,z \in \C^n$. We write $\Ac_{sl}$ for the set of sufficiently localized operators and $\overline{\Ac_{sl}}$ for its closure in the operator norm.

If $A \in \Lc(F^p_{\alpha})$ and $A^* \in \Lc(F^q_{\alpha})$ both satisfy the two conditions
\begin{align*}
\sup\limits_{z \in \C^n} &\int_{\C^n} \abs{\sp{Tk_z}{k_w}} \, \mathrm{d}w < \infty,\\
\lim\limits_{r \to \infty} \sup\limits_{z \in \C^n} &\int_{\C^n \setminus B(z,r)} \abs{\sp{Tk_z}{k_w}} \, \mathrm{d}w = 0,
\end{align*}
we call $A$ \emph{weakly localized} \cite{IsMiWi}. The set of weakly localized operators and its closure are denoted by $\Ac_{wl}$ and $\overline{\Ac_{wl}}$, respectively.

The main results of this paper can now be summarized as follows (see Theorem \ref{thm:all_algebras_coincide} below for details):

\begin{thm*}~
\begin{itemize}
	\item[(i)] $\Tc^p = \Ac^p = \overline{\Ac_{sl}} = \overline{\Ac_{wl}}$.
	\item[(ii)] $A \in \Tc^p$ if and only if $[T_f,A] \in \Kc(F^p_{\alpha})$ for all $f \in \VO_b(\C^n)$.
\end{itemize}
\end{thm*}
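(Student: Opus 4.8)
The plan for (i) is to close the cycle
\[
\Tc^p \subseteq \overline{\Ac_{sl}} \subseteq \overline{\Ac_{wl}} \subseteq \Ac^p \subseteq \Tc^p,
\]
in which only the last inclusion is deep: it is the Fock-space instance of Fulsche's theorem from \cite{Fulsche} identifying $\Ac^p$ with $\Tc^p$, which I take as a black box. The other three are elementary estimates. For $\Tc^p\subseteq\overline{\Ac_{sl}}$ it is enough to know that a single Toeplitz operator $T_g$ with $g\in L^\infty$ is sufficiently localized and that $\Ac_{sl}$ is an algebra (the kernel-convolution estimate for products is in \cite{XiZhe}); indeed $\sp{T_g k_z}{k_w}=\sp{g k_z}{k_w}$ together with the identity $|k_z(u)|\,e^{-\frac\alpha2|u|^2}=e^{-\frac\alpha2|u-z|^2}$ gives
\[
\abs{\sp{T_g k_z}{k_w}}\le\|g\|_\infty\left(\frac{\alpha}{\pi}\right)^{n}\int_{\C^n}e^{-\frac\alpha2|u-z|^2}e^{-\frac\alpha2|u-w|^2}\,\mathrm{d}u,
\]
which decays like a Gaussian in $|z-w|$ and hence dominates $(1+|z-w|)^{-\beta}$ for every $\beta$. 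The inclusion $\overline{\Ac_{sl}}\subseteq\overline{\Ac_{wl}}$ is immediate, since polynomial decay of exponent $\beta>2n$ is integrable against $\mathrm{d}w$ on $\C^n$. For $\overline{\Ac_{wl}}\subseteq\Ac^p$ one truncates the Berezin-type kernel: the vanishing-tail hypothesis on $A$ and on $A^*$ yields, for each $\eps>0$, a radius $r$ so that the operator obtained from the part of the kernel supported in $\set{(z,w):|z-w|<r}$ --- which has propagation at most $r$ --- approximates $A$ to within $\eps$, the error being estimated by a Schur test using both the row and the column integrals. Adjoining $\Ac^p=\Tc^p$ closes the cycle.

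The ``only if'' direction of (ii) reuses $\Tc^p=\Ac^p$ and reduces to showing that $[T_f,A]\in\Kc(F^p_\alpha)$ whenever $AP\in\BDO^p$ and $f\in\VO_b(\C^n)$. As $A=AP$ on $F^p_\alpha$, one has $[T_f,A]=P\bigl(M_f(AP)-(AP)M_f\bigr)P=P[M_f,AP]P$. If $B\in\BO$ has propagation at most $\rho$, then so does $[M_f,B]$, and localizing over a unit lattice $\set{Q_j}$ gives $\|M_{\1_{Q_k}}[M_f,B]M_{\1_{Q_j}}\|\lesssim\|B\|\,\Osc_{B(q_j,\rho+c)}f$, which tends to $0$ as $q_j\to\infty$ because $f$ has vanishing oscillation. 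Splitting $[M_f,B]=C_1+C_2$, with $C_1$ the part of propagation $\le\rho$ having both ``legs'' inside a large ball $B(0,R)$ and $C_2$ the remainder, one gets $PC_1P\in\Kc(F^p_\alpha)$ --- it factors as $(PM_{\1_K})(\cdots)(M_{\1_{K'}}P)$ with $K,K'$ compact, and $PM_{\1_K}$, $M_{\1_{K'}}P$ are compact --- while $\|PC_2P\|\lesssim\|P\|^2\sup\set{\|M_{\1_{Q_k}}[M_f,B]M_{\1_{Q_j}}\|:q_j\notin B(0,R)}$ can be made arbitrarily small by enlarging $R$ (the norm of a band operator of fixed propagation is comparable to the supremum of its block norms). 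Hence $P[M_f,B]P$ is compact; approximating $B\in\BDO^p$ by band operators in operator norm and passing to the limit gives $[T_f,A]\in\Kc(F^p_\alpha)$ for all $A\in\Ac^p$. Since the essential commutant of any subset of $\Lc(F^p_\alpha)$ is a norm-closed subalgebra and $\Tc^p$ is generated by Toeplitz operators, this yields the inclusion $\Tc^p\subseteq\{A:[T_f,A]\in\Kc(F^p_\alpha)\text{ for all }f\in\VO_b(\C^n)\}$.

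The ``if'' direction of (ii) is the heart of the matter and, I expect, the main obstacle: one must show that $[T_f,A]\in\Kc(F^p_\alpha)$ for every $f\in\VO_b(\C^n)$ forces $AP\in\BDO^p$, after which $A\in\Ac^p=\Tc^p$ by Fulsche and part (i). The difficulty is that a purely qualitative hypothesis --- the commutators lie in $\Kc(F^p_\alpha)$ --- has to be upgraded to a quantitative conclusion --- $A$ is a norm limit of band operators --- and this is exactly where the Hilbert-space and $C^*$-algebra arguments of Xia \cite{Xia2015,Xia2018} and Wu--Zheng \cite{WuZhe} do not transfer to $p\ne2$. I would argue by localization at infinity. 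Assume $AP\notin\BDO^p$; by a standard characterization of band-dominatedness --- membership in $\BDO^p$ is equivalent to $\|[AP,M_u]\|$ becoming uniformly small as the Lipschitz constant of the multiplier $u$ tends to $0$ --- there are $\eps_0>0$ and multipliers $u_k$ with $\|u_k\|_\infty\le1$, $\mathrm{Lip}(u_k)\to0$, and $\|[AP,M_{u_k}]\|\ge\eps_0$. Passing to the shifted operators $W^{-z_k}APW^{z_k}$ along a sequence $z_k\to\infty$ and extracting a $\Pc$-limit operator (a standard compactness step in the Fock-space limit-operator calculus, along which compact operators converge strongly to $0$) isolates a genuine long-range component of $A$ that lives at infinity. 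Because $\VO_b(\C^n)$ is rich enough to separate any two regions at each fixed scale once one is sufficiently far out, this component is incompatible with all commutators $[T_f,A]$ being compact: one assembles a single $f\in\VO_b(\C^n)$ out of the relevant multipliers, placed further and further from the origin, for which $[T_f,A]$ fails to be compact. The replacement for Xia's Hilbert-space input here is the correspondence theory of \cite{Fulsche}. An essentially equivalent route is to prove directly that such an $A$ is weakly localized --- commutator-compactness against slowly oscillating symbols is precisely what forces the averaged decay of $\sp{A k_z}{k_w}$ in $|z-w|$, uniform as $z\to\infty$, that defines $\Ac_{wl}$ --- and then invoke $\overline{\Ac_{wl}}=\Tc^p$ from part (i).
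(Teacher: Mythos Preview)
Your treatment of part (i) and the ``only if'' half of (ii) is reasonable, though the attribution to Fulsche is imprecise: what \cite{Fulsche} actually proves is that norm-uniform continuity of $z\mapsto W_{-z}AW_z$ forces $A$ into the closure of $\{T_f:f\in\BUC\}$. The paper does not take $\Ac^p=\Tc^p$ as a black box; it supplies the missing bridge by showing (Proposition~\ref{prop:suff_localized}) that every sufficiently localized operator has a norm-uniformly continuous shift map, and only then invokes Fulsche. Your cycle omits this step. For the ``only if'' direction of (ii), your direct band-operator estimate differs from the paper's route (which uses limit operators: $(T_f)_x=f(x)I$ for $f\in\VO_b$, hence $[T_f,A]_x=0$ for all $x$, hence $[T_f,A]$ is compact by Proposition~\ref{prop:limit_operator_characterization}) but is workable.

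The genuine gap is in the ``if'' direction of (ii). Your sketch invokes two tools that are not available at this stage. First, you propose to extract a $\Pc$-limit operator of $W_{-z_k}APW_{z_k}$, but the limit-operator calculus (Proposition~\ref{prop:limit_operators}) is proved only for $A\in\Ac^p$; for an arbitrary $A$ there is no reason for these shifts to converge strongly. Second, you say Fulsche's correspondence theory replaces Xia's Hilbert-space input here, but Fulsche plays no role in this implication: the paper proves $(h)\Rightarrow(d)$ directly (Theorem~\ref{thm:compact_commutator_implies_BDO}), and Fulsche enters only afterwards to identify $\Ac^p$ with $\Tc^p$. Your alternative route through weak localization is also unsubstantiated: there is no obvious mechanism by which compactness of $[T_f,A]$ for $f\in\VO_b$ yields the uniform integral decay of $|\sp{Ak_z}{k_w}|$ required by $\Ac_{wl}$.

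What the paper actually does is rather different and more concrete. It writes $AP=\sum_j M_{\varphi_{j,t}^{p/q}}APM_{\varphi_{j,t}}+\sum_j M_{\varphi_{j,t}^{p/q}}[M_{\varphi_{j,t}},AP]$ via a $p$-partition of unity; the first sum is a band operator, and the tail of the second is controlled by a roots-of-unity trick (Lemma~\ref{lem:sum_of_products}) that bounds $\bigl\|\sum_{j>j_0}^k M_{\varphi_{j,t}^{p/q}}[M_{\varphi_{j,t}},AP]\bigr\|$ by $N\cdot\|[M_g,AP]\|$ for some $g\in\mathscr{G}_{j_0,t}$. The smallness of the latter comes from a $\Pc$-theory lemma (Lemma~\ref{lem:P_theory_lemma}): given a $\Pc$-null sequence of $\Pc$-compact operators with norms bounded below, one extracts a subsequence whose sum is $\Pc$-strongly convergent with $\Pc$-essential norm equal to the $\limsup$ of the individual norms. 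Applied to $K_n=[M_{g_n},AP]$ with disjointly supported $g_n$ of shrinking oscillation, this manufactures a single $f=\sum g_{n_k}\in\VO_b$ for which $[M_f,AP]$ has positive $\Pc$-essential norm, contradicting the Hankel-operator computation $[M_f,AP]=[T_f,A]P+H_fAP-AH_{\bar f}^*$ (with $H_f,H_{\bar f}$ compact for $f\in\VO_b$). Your sketch gestures at assembling such an $f$ but contains none of this machinery; in particular, neither the block-extraction lemma nor the roots-of-unity estimate has an obvious substitute, and without them the passage from the qualitative hypothesis to a norm approximation by band operators does not go through.
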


Here, $[A,T_f] := AT_f-T_fA$ denotes the commutator of the two operators and $\VO_b(\C^n)$ stands for the set of bounded functions with \emph{vanishing oscillation} at infinity.

The paper is organized as follows. In Section \ref{sec:Preliminaries} we recall some important concepts and provide preliminary results. They are then applied in Section \ref{sec:Essential_Commutants}, where the essential commutant and the essential bicommutant of $\Ac^p$ are determined. In Section \ref{sec:Characterizations} we give a proof of our main theorem and mention a few direct corollaries. We are only concerned with the Fock space in this paper, but the results in Section \ref{sec:Essential_Commutants} are likely to hold verbatim for other spaces, too. The characterization of the Toeplitz algebra in Section \ref{sec:Characterizations}, however, requires a recent result by Fulsche \cite{Fulsche}, which so far is only available for the Fock space.

\section{Preliminaries} \label{sec:Preliminaries}

In this section we introduce the main concepts of this paper. Most of these concepts are well-known, but since they originate from different areas, we recall some important results that are needed later on.

\subsection{The Berezin transform}

Equation \eqref{eq:projection} implies that $F^2_{\alpha}$ is a reproducing kernel Hilbert space with reproducing kernel $K(z,w) = e^{\alpha z \cdot \overline{w}}$. A direct computation shows $\norm{K(\cdot,w)}_p = e^{\frac{\alpha}{2} |w|^2}$ so that the normalized reproducing kernels $k_w$, given by
\[k_w(z) := e^{\alpha z \cdot \overline{w} - \frac{\alpha}{2}|w|^2},\]
satisfy $\norm{k_w}_p = 1$ for all $w \in \C^n$ and $p \in (1,\infty)$. With this in mind we can define the Berezin transform $\Bc(A)$ of a bounded linear operator $A \in \Lc(F^p_{\alpha})$ as
\[[\Bc(A)](w) := \sp{Ak_w}{k_w}.\]
H\"older's inequality implies that $\Bc(A) \from \C^n \to \C$ is bounded and continuous. In fact, \cite[Theorem 2]{Coburn} shows that $\Bc(A)$ is Lipschitz continuous for $p = 2$, $\alpha = \frac{1}{2}$. It is probably well known that this generalizes to all $p \in (1,\infty)$ and $\alpha > 0$, but due to the lack of a suitable reference, we give a quick elementary proof of this fact.

\begin{prop} \label{prop:Berezin_Lipschitz}
There is a constant $C > 0$ (depending only on $p$ and $n$) such that for all $A \in \Lc(F^p_{\alpha})$ and $z,w \in \C^n$:
\[\abs{[\Bc(A)](z) - [\Bc(A)](w)} \leq C\sqrt{\alpha}\norm{A}|z-w|.\]
\end{prop}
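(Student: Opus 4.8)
The plan is to interpolate between $w$ and $z$ along the straight segment joining them and differentiate the Berezin transform there, the crucial point being that $\Bc(A)$ is unchanged when the reproducing kernels are multiplied by unimodular scalars. So I would fix $z,w\in\C^n$, set $a:=z-w$ and $\gamma(t):=w+ta$ for $t\in[0,1]$, and study $\varphi(t):=[\Bc(A)](\gamma(t))=\sp{Ak_{\gamma(t)}}{k_{\gamma(t)}}$. Differentiating $k_{\gamma(t)}(u)=e^{\alpha u\cdot\overline{\gamma(t)}-\frac{\alpha}{2}\abs{\gamma(t)}^2}$ in $t$ and writing $u\cdot\overline{a}-\Re\bigl(\gamma(t)\cdot\overline{a}\bigr)=(u-\gamma(t))\cdot\overline{a}+i\Im\bigl(\gamma(t)\cdot\overline{a}\bigr)$ gives
\[\frac{\mathrm{d}}{\mathrm{d}t}k_{\gamma(t)}=g_t+i\alpha\Im\bigl(\gamma(t)\cdot\overline{a}\bigr)\,k_{\gamma(t)},\qquad g_t(u):=\alpha\bigl((u-\gamma(t))\cdot\overline{a}\bigr)\,k_{\gamma(t)}(u);\]
both $\frac{\mathrm{d}}{\mathrm{d}t}k_{\gamma(t)}$ and $g_t$ are polynomials times $k_{\gamma(t)}$, hence lie in every $F^r_\alpha$.

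Granting for the moment that $t\mapsto k_{\gamma(t)}$ is norm-differentiable into both $F^p_\alpha$ and $F^q_\alpha$, the product rule for the pairing $\sp{\cdot}{\cdot}$ gives $\varphi'(t)=\sp{A\frac{\mathrm{d}}{\mathrm{d}t}k_{\gamma(t)}}{k_{\gamma(t)}}+\sp{Ak_{\gamma(t)}}{\frac{\mathrm{d}}{\mathrm{d}t}k_{\gamma(t)}}$. The point is that the summand $i\alpha\Im(\gamma(t)\cdot\overline{a})\,k_{\gamma(t)}$ drops out: it contributes $i\alpha\Im(\gamma(t)\cdot\overline{a})\sp{Ak_{\gamma(t)}}{k_{\gamma(t)}}$ to the first term and, by conjugate-linearity in the second slot, $\overline{i\alpha\Im(\gamma(t)\cdot\overline{a})}\sp{Ak_{\gamma(t)}}{k_{\gamma(t)}}=-i\alpha\Im(\gamma(t)\cdot\overline{a})\sp{Ak_{\gamma(t)}}{k_{\gamma(t)}}$ to the second. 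Hence $\varphi'(t)=\sp{Ag_t}{k_{\gamma(t)}}+\sp{Ak_{\gamma(t)}}{g_t}$, and Hölder's inequality for the pairing together with $\norm{k_{\gamma(t)}}_p=\norm{k_{\gamma(t)}}_q=1$ gives $\abs{\varphi'(t)}\le c\,\norm{A}\bigl(\norm{g_t}_p+\norm{g_t}_q\bigr)$ with $c$ depending only on $p$ and $n$.

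It remains to estimate $\norm{g_t}_r$ for $r\in\{p,q\}$, a direct Gaussian computation: using $\abs{k_{\gamma(t)}(u)}^r e^{-\frac{\alpha r}{2}\abs{u}^2}=e^{-\frac{\alpha r}{2}\abs{u-\gamma(t)}^2}$, the substitution $u=\gamma(t)+\alpha^{-1/2}v$, and $\abs{v\cdot\overline{a}}\le\abs{a}\,\abs{v}$, one finds $\norm{g_t}_r\le\sqrt{\alpha}\,\abs{a}\,E_{r,n}$, where $E_{r,n}:=\bigl[\bigl(\tfrac{r}{2\pi}\bigr)^n\int_{\C^n}\abs{v}^r e^{-\frac{r}{2}\abs{v}^2}\,\mathrm{d}v\bigr]^{1/r}<\infty$ is independent of $\alpha$. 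Integrating over $[0,1]$,
\[\abs{[\Bc(A)](z)-[\Bc(A)](w)}=\abs{\varphi(1)-\varphi(0)}\le\int_0^1\abs{\varphi'(t)}\,\mathrm{d}t\le c\,(E_{p,n}+E_{q,n})\sqrt{\alpha}\,\norm{A}\,\abs{z-w},\]
and since $q=p/(p-1)$ is determined by $p$, the resulting constant depends only on $p$ and $n$, as required.

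The step requiring the most care is the norm-differentiability of $t\mapsto k_{\gamma(t)}$ with the derivative above; I expect this to be the main (but routine) obstacle, provable by a dominated-convergence estimate on the difference quotients. It can also be bypassed entirely: with the unimodular phase $\lambda(t):=\exp\bigl(-i\alpha\int_0^t\Im(\gamma(s)\cdot\overline{a})\,\mathrm{d}s\bigr)$ one checks $\frac{\mathrm{d}}{\mathrm{d}t}\bigl(\lambda(t)k_{\gamma(t)}(u)\bigr)=\lambda(t)g_t(u)$ pointwise in $u$, hence $\lambda(1)k_z-k_w=\int_0^1\lambda(t)g_t\,\mathrm{d}t$; taking $\norm{\cdot}_r$, Minkowski's integral inequality and the bound on $\norm{g_t}_r$ give $\norm{k_z-\overline{\lambda(1)}\,k_w}_r\le\sqrt{\alpha}\,\abs{a}\,E_{r,n}$, and then $[\Bc(A)](z)-[\Bc(A)](w)=\sp{A(k_z-\nu k_w)}{k_z}+\sp{A\nu k_w}{k_z-\nu k_w}$ with $\nu:=\overline{\lambda(1)}$ (using $\sp{A\nu k_w}{\nu k_w}=\sp{Ak_w}{k_w}$) together with Hölder gives the same bound. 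Everything else is routine.
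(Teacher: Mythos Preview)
Your argument is correct, but it follows a genuinely different route from the paper's proof. The paper first uses the Weyl operators to reduce to the case $w=0$ via the identity $[\Bc(A)](z)-[\Bc(A)](w)=[\Bc(W_{-w}AW_w)](z-w)-[\Bc(W_{-w}AW_w)](0)$, and then shows $\norm{k_z-\1}_p\le C\sqrt{\alpha}\,|z|$ by a direct power-series estimate on $e^{\alpha w\cdot\bar z}-1$, together with the trivial bound $\norm{k_z-\1}_p\le 2$ for $|z|\ge\alpha^{-1/2}$. In other words, the paper handles the problematic phase of $k_z$ by translating one of the two points to the origin, where no phase appears.

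You deal with the same obstruction differently: you keep both points and observe that the phase contribution $i\alpha\Im(\gamma(t)\cdot\bar a)\,k_{\gamma(t)}$ in $\frac{\mathrm{d}}{\mathrm{d}t}k_{\gamma(t)}$ cancels in the sesquilinear form (equivalently, that it can be absorbed into a unimodular factor $\lambda(t)$), leaving only the centred piece $g_t$, whose $L^r$-norm is \emph{uniformly} bounded by $\sqrt{\alpha}\,|a|\,E_{r,n}$ thanks to the shift $u\mapsto u-\gamma(t)$ in the Gaussian. Your bypass version, proving $\norm{k_z-\nu k_w}_r\le\sqrt{\alpha}\,|z-w|\,E_{r,n}$ for a suitable unimodular $\nu$, is the cleanest formulation and avoids any differentiability discussion. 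Compared with the paper, your approach is a bit more conceptual, avoids the case split $|z|\lessgtr\alpha^{-1/2}$, and never invokes the Weyl operators; the paper's proof is more elementary (only power series and the triangle inequality) once the Weyl reduction is in place.
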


To simplify the computation a bit, we introduce the so-called Weyl operators, which will also be used again later. For every $z \in \C^n$ and $f \in L^p_{\alpha}$ we define
\[W_zf := (f \circ \tau_z) \cdot k_z,\]
where $\tau_z(w) := w-z$. $W_z$ is a surjective isometry for all $z \in \C^n$ and $p \in (1,\infty)$. The inverse and the adjoint of $W_z$ are both equal to $W_{-z}$, where we interpret the adjoint as an operator on $L^q_{\alpha}$ via the usual duality pairing as explained above. As $P$ and $W_z$ commute, the same is true for $W_z$ if restricted to the Fock space $F^p_{\alpha}$. Therefore, the Berezin transform can be written as
\[[\Bc(A)](w) = \sp{W_{-w}AW_w\1}{\1} = [\Bc(W_{-w}AW_w)](0).\]
Similarly, using the product formula $W_zW_w = e^{-i\alpha\Imag(z \cdot \overline{w})}W_{z+w}$, we obtain
\[[\Bc(A)](z) = \sp{Ak_z}{k_z} = \sp{W_{-w}AW_wk_{z-w}}{k_{z-w}} = [\Bc(W_{-w}AW_w)](z-w).\]
It therefore suffices to prove Proposition \ref{prop:Berezin_Lipschitz} for $w = 0$.

\begin{proof}[Proof of Proposition \ref{prop:Berezin_Lipschitz}]
By H\"older's inequality and the discussion above, it suffices to show that $\norm{k_z - \1}_p \leq C\sqrt{\alpha}|z|$. Moreover, since $\norm{k_z}_p = 1$ for all $z \in \C^n$, we only need to consider a neighborhood of $0$, say $z \in B(0,\frac{1}{\sqrt{\alpha}})$. We have
\[k_z(w) - 1 = e^{\alpha w \cdot \overline{z} - \frac{\alpha}{2}|z|^2} - 1 = (e^{\alpha w \cdot \overline{z}} - 1)e^{-\frac{\alpha}{2}|z|^2} + e^{-\frac{\alpha}{2}|z|^2} - 1.\]
Obviously, $|e^{-\frac{\alpha}{2}|z|^2} - 1| \leq \sqrt{\alpha}|z|$ and $|e^{-\frac{\alpha}{2}|z|^2}| \leq 1$ for all $z \in \C^n$. Moreover,
\[\norm{e^{\alpha w \cdot \overline{z}} - 1}_p = \norm{\sum\limits_{j = 1}^{\infty} \frac{(\alpha w \cdot \overline{z})^j}{j!}}_p \leq \sum\limits_{j = 1}^{\infty} \frac{\norm{(\alpha w \cdot \overline{z})^j}_{p}}{j!} \leq \sum\limits_{j = 1}^{\infty} \alpha^j|z|^j\frac{\norm{|w|^j}_p}{j!},\]
where
\[\norm{|w|^j}_p = \left(\frac{\alpha p}{2}\right)^{-\frac{j}{2}}\left(\frac{\Gamma(\frac{jp}{2}+n)}{\Gamma(n)}\right)^{\frac{1}{p}} \leq \tilde{C}^j\alpha^{-\frac{j}{2}}\Gamma\left(\frac{j+1}{2}\right)\]
for some constant $\tilde{C}$ that depends on $p$ and $n$. In particular,
\[\norm{e^{\alpha w \cdot \overline{z}} - 1}_p \leq \sqrt{a}|z|\sum\limits_{j = 0}^{\infty} \alpha^{\frac{j}{2}}|z|^j\tilde{C}^{j+1}\frac{\Gamma(\frac{j}{2}+1)}{(j+1)!} \leq \sqrt{a}|z|\sum\limits_{j = 0}^{\infty} \tilde{C}^{j+1}\frac{\Gamma(\frac{j}{2}+1)}{(j+1)!}\]
for $z \in B(0,\frac{1}{\sqrt{\alpha}})$. As this series is convergent, the result follows.
\end{proof}

\subsection{Band-dominated operators}

Let $p \in (1,\infty)$. For every $t > 0$ we can choose a family of cutoff functions $\set{\varphi_{j,t} : j \in \N}$ that satisfies the following properties:
\begin{itemize}
	\item[(i)] $\sum\limits_{j = 1}^{\infty} [\varphi_{j,t}(z)]^p = 1$ for every $z \in \C^n$.
	\item[(ii)] $\sup\limits_{j \in \N} \diam(\supp \varphi_{j,t}) < \infty$.
	\item[(iii)] For $w,z \in \C^n$ with $|z-w| \leq \frac{1}{t}$ we have $\sum\limits_{j = 1}^{\infty} |\varphi_{j,t}(z) - \varphi_{j,t}(w)|^p < t$.
	\item[(iv)] For all $z \in \C^n$ and $r > 0$ the set $\set{j \in \N : \supp\varphi_{j,t} \cap B(z,r) \neq 0}$ is finite.
\end{itemize}
Here, $\supp$ denotes the (closed) support of a function and $\diam$ denotes the Euclidean diameter of a set. Such families always exist even in a much more general context (see \cite[Lemma 3.1]{HaSe}). For $\C^n$ these cutoff functions are easily constructed by hand \cite{BaIs,FuHa} and it is readily seen that these families can be chosen in such a way that there is a universal constant $N$ (depending only on the dimension $n$) satisfying
\begin{itemize}
	\item[(v)] For every $z \in \C^n$, $t > 0$ the set $\set{j \in \N : \varphi_{j,t}(z) \neq 0}$ has at most $N$ elements.
\end{itemize}
Despite the constructions in \cite{BaIs,FuHa}, where $N$ grows exponentially in $n$, the best possible constant is actually $N = 2n+1$ (see \cite[Proposition 2.2.7]{BeDra}). In $n = 1$, for example, this can be achieved by choosing hexagons instead of squares for the covering. Anyway, in this paper we only need the existence of such a constant and therefore stick with $N$. Likewise, the functions $\varphi_{j,t}$ are merely auxiliary and it is completely irrelevant which ones we choose. We therefore just assume that we chose them here satisfying (i) to (v) and fix them for the rest of the paper. The reason why these cutoff functions are useful (and also why the choice does not matter) is the following.

\begin{prop} \label{prop:BDO_characterization}
(\cite[Proposition 3.5]{HaSe})\\
Let $A \in \Lc(L^p_{\alpha})$. Then $A \in \BDO^p$ if and only if
\begin{equation} \label{eq:BDO_characterization}
\lim\limits_{t \to 0} \sup\limits_{\norm{f}_p = 1} \sum\limits_{j = 1}^{\infty} \norm{[A,M_{\varphi_{j,t}}]f}_p^p = 0.
\end{equation}
\end{prop}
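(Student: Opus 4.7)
The plan is to exploit all five properties of the partition of unity $\{\varphi_{j,t}\}$ simultaneously: property (i) furnishes a Plancherel-style identity $\sum_j \varphi_{j,t}^p = 1$ that dovetails perfectly with $L^p$-norms; properties (ii), (iv), and (v) provide locality and finite multiplicity; and property (iii) quantifies smoothness at scale $1/t$. Both implications then reduce to Schur-type $L^p$ estimates that convert pointwise finite-multiplicity statements into operator bounds.

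For the $(\Rightarrow)$ direction, I would first show that
\[
\Phi_t(A) := \sup_{\|f\|_p = 1} \left( \sum_{j=1}^\infty \|[A, M_{\varphi_{j,t}}] f\|_p^p \right)^{1/p}
\]
is continuous in operator norm: the identity $\sum_j \|M_{\varphi_{j,t}} g\|_p^p = \|g\|_p^p$ (applied with $g = f$ and $g = Af$) combined with the triangle inequality yields $\Phi_t(A) \leq 2 \|A\|$. Hence the set of $A$ satisfying the vanishing condition is closed in operator norm, reducing the problem to checking band operators $B$. For $B$ with band-width $r$ and $t \leq 1/r$, the band structure confines the commutator computation to pairs $(z,w)$ with $|z - w| \leq r \leq 1/t$, where property (iii) bounds $\sum_j |\varphi_{j,t}(z) - \varphi_{j,t}(w)|^p$ by $t$; a Schur-type estimate then yields $\Phi_t(B)^p \lesssim_B t \to 0$.

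For the $(\Leftarrow)$ direction, I would construct explicit band approximations. Letting $d_t := \sup_j \diam(\supp \varphi_{j,t})$ (finite by (ii)), define
\[
S_t := \{(j,k) : \dist(\supp \varphi_{j,t}, \supp \varphi_{k,t}) \leq 4 d_t\}, \qquad A_t := \sum_{(j,k) \in S_t} M_{\varphi_{j,t}^p} A M_{\varphi_{k,t}^p};
\]
properties (iv) and (v) make this double sum locally finite, and $A_t \in \BO$ with band-width at most $6 d_t$. Using (i) to expand $A = \sum_{j,k} M_{\varphi_{j,t}^p} A M_{\varphi_{k,t}^p}$, the difference $A - A_t$ consists only of pairs with disjoint supports, for which the key identity
\[
M_{\varphi_{j,t}^p} A M_{\varphi_{k,t}^p} = -M_{\varphi_{j,t}^{p-1}} [A, M_{\varphi_{j,t}}] M_{\varphi_{k,t}^p}
\]
holds (since $\varphi_{j,t} \varphi_{k,t} \equiv 0$ on disjoint supports). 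This rewrites $A - A_t$ entirely in terms of the commutators that appear in the hypothesis, and a Schur-type bound based on H\"older's inequality with conjugate exponents $p, p/(p-1)$ together with $\sum_j \varphi_{j,t}^p = 1$ yields $\|A - A_t\| \lesssim \Phi_t(A)$, which tends to zero.

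The main technical obstacle in both directions is executing the Schur-type $L^p$ estimate cleanly: one must combine H\"older's inequality with the finite multiplicity (v) and the partition identity (i) to pass between $\ell^p$-sums over the partition index $j$ and $L^p$-norms on $\C^n$ without losing constants. The exponent $p$ in property (i) is calibrated precisely so that the $\ell^p \leftrightarrow L^p$ conversion closes up uniformly in $p \in (1, \infty)$.
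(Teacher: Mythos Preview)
The paper does not give its own proof of this proposition; it is quoted verbatim from \cite[Proposition 3.5]{HaSe} and used as a black box. So there is nothing to compare your argument against here. That said, a few remarks on your plan are in order.

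Your forward direction is fine. The Lipschitz bound $\Phi_t(A)\le 2\|A\|$ follows exactly as you indicate from $\sum_j\|M_{\varphi_{j,t}}g\|_p^p=\|g\|_p^p$, and the reduction to band operators is the right move. The ``Schur-type estimate'' for a band operator can indeed be made precise by localizing $f$ to cubes of side comparable to the band-width and subtracting the value of $\varphi_{j,t}$ at the cube centre, then invoking property~(iii).

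Your backward direction, however, is more complicated than necessary and the final estimate does not close up as written. After the key identity you get
\[
(A-A_t)f=-\sum_j M_{\varphi_{j,t}^{p-1}}[A,M_{\varphi_{j,t}}]M_{\psi_j}f,\qquad \psi_j=\sum_{k:(j,k)\notin S_t}\varphi_{k,t}^p,
\]
and H\"older with $\sum_j\varphi_{j,t}^p=1$ gives $\|(A-A_t)f\|_p^p\le\sum_j\|[A,M_{\varphi_{j,t}}]M_{\psi_j}f\|_p^p$. But the inputs $M_{\psi_j}f$ depend on $j$, so this is \emph{not} dominated by $\Phi_t(A)^p\|f\|_p^p$; the hypothesis only controls $\sum_j\|[A,M_{\varphi_{j,t}}]f\|_p^p$ for a \emph{single} $f$. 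The fix is to drop the double-index truncation altogether and use the simpler approximant
\[
B_t:=\sum_j M_{\varphi_{j,t}^{p-1}}AM_{\varphi_{j,t}}\in\BO,
\]
which has band-width at most $d_t$. Then $A-B_t=\sum_j M_{\varphi_{j,t}^{p-1}}[M_{\varphi_{j,t}},A]$, and the same H\"older step gives directly $\|A-B_t\|\le\Phi_t(A)$. This is precisely the decomposition the paper itself exploits later in the proof of Theorem~\ref{thm:compact_commutator_implies_BDO}.
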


This characterization is useful in many ways as it allows to commute band-dominated operators with cutoff functions for a low price. For example, it is easily shown that compact operators satisfy \eqref{eq:BDO_characterization}. Moreover, the inverse closedness of $\BDO^p$ is immediate as well. Combining these facts, Proposition \ref{prop:BDO_characterization} can be used to show the inverse closedness of the corresponding Calkin algebra $\BDO^p / \Kc(L^p_{\alpha})$ \cite[Theorem 3.7]{HaSe}, that is, $\BDO^p$ is closed with respect to Fredholm inverses.

Recall that $\Ac^p$ is the restriction of $\BDO^p$ to $F^p_{\alpha}$. To be precise, $A \in \Ac^p$ if and only if $AP \in \BDO^p$ by definition. $\Ac^p$ therefore inherits a lot of properties from $\BDO^p$.

\begin{prop} \label{prop:A^p_properties}
(\cite[Theorem 3.10]{HaSe})\\
$\Ac^p$ is an inverse closed Banach algebra which contains $\Kc(F^p_{\alpha})$ and $\Tc^p$. The corresponding Calkin algebra $\Ac^p / \Kc(F^p_{\alpha})$ is inverse closed, too.
\end{prop}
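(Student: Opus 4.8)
The plan is to transfer everything from the ambient space $L^p_\alpha$ to $F^p_\alpha$ along the defining correspondence $A \in \Ac^p \iff AP \in \BDO^p$, exploiting that $\BDO^p$ is already known to be an inverse closed Banach algebra containing $\Kc(L^p_\alpha)$, with inverse closed Calkin algebra $\BDO^p/\Kc(L^p_\alpha)$ (the facts recorded right after Proposition \ref{prop:BDO_characterization}). The device that makes the transfer work is, for $A \in \Lc(F^p_\alpha)$, the operator
\[\tilde A := AP + (I-P) \in \Lc(L^p_\alpha).\]
Since $P$ is a bounded projection onto $F^p_\alpha$, we have the topological splitting $L^p_\alpha = F^p_\alpha \oplus \ker P$, with respect to which $\tilde A$ is block diagonal: it acts as $A$ on $F^p_\alpha$ and as the identity on $\ker P$. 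Two elementary inputs are also used throughout: every multiplication operator $M_f$ ($f \in L^\infty$) has propagation $0$, so $M_f \in \BO \subseteq \BDO^p$ (in particular $I = M_{\1} \in \BDO^p$); and $P \in \BDO^p$, which follows from Proposition \ref{prop:BDO_characterization} together with the Gaussian off-diagonal decay of the reproducing kernel (cf. \cite{FuHa}). Hence $I-P \in \BDO^p$, and $\tilde A \in \BDO^p$ whenever $A \in \Ac^p$.

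I would first dispose of the purely algebraic assertions. $\Ac^p$ is norm closed because $A_n \to A$ in $\Lc(F^p_\alpha)$ forces $A_nP \to AP$ and $\BDO^p$ is closed; it is closed under products because $P$ restricts to the identity on $F^p_\alpha = \mathrm{ran}\,P$, so $(AB)P = (AP)(BP) \in \BDO^p$; and $I_{F^p_\alpha} \in \Ac^p$ since $I_{F^p_\alpha}P = P \in \BDO^p$. Thus $\Ac^p$ is a unital Banach algebra. For $K \in \Kc(F^p_\alpha)$, the operator $KP$ is compact on $L^p_\alpha$, and every compact operator lies in $\BDO^p$ — a rank-one operator $u \otimes v$ on $L^p_\alpha$ equals the norm limit of $M_{\1_{B(0,r)}}(u \otimes v)M_{\1_{B(0,r)}}$ as $r \to \infty$, each of which has propagation at most $2r$, and $\BDO^p$ is closed, so the general case follows by linearity. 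Hence $K \in \Ac^p$. Finally, on $F^p_\alpha$ one has $T_f = PM_f$, so $T_fP = PM_fP \in \BDO^p$; since $\Ac^p$ is a closed algebra containing all $T_f$, it contains $\Tc^p$.

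For inverse closedness, if $A \in \Ac^p$ is invertible in $\Lc(F^p_\alpha)$, then $\tilde A$ is invertible on $L^p_\alpha$ with $\tilde A^{-1} = A^{-1}P + (I-P)$; inverse closedness of $\BDO^p$ gives $\tilde A^{-1} \in \BDO^p$, hence $A^{-1}P = \tilde A^{-1} - (I-P) \in \BDO^p$, i.e. $A^{-1} \in \Ac^p$. For the Calkin algebra, note first that $\Kc(F^p_\alpha)$ is a closed two-sided ideal of $\Ac^p$, so $\Ac^p/\Kc(F^p_\alpha)$ sits inside $\Lc(F^p_\alpha)/\Kc(F^p_\alpha)$. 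If $A \in \Ac^p$ is Fredholm, then $\tilde A$ is Fredholm on $L^p_\alpha$ (block diagonal with one Fredholm and one invertible block), so inverse closedness of $\BDO^p/\Kc(L^p_\alpha)$ yields $\tilde B \in \BDO^p$ with $\tilde A\tilde B - I,\ \tilde B\tilde A - I \in \Kc(L^p_\alpha)$. Set $B := (P\tilde B)|_{F^p_\alpha}$; then $BP = P\tilde BP \in \BDO^p$, so $B \in \Ac^p$, and using $\tilde Ag = Ag$ for $g \in F^p_\alpha$ together with $P(I-P) = 0$ one checks directly that $BA - I = P(\tilde B\tilde A - I)|_{F^p_\alpha}$ and $AB - I = P(\tilde A\tilde B - I)|_{F^p_\alpha}$, both compact on $F^p_\alpha$. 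Hence $A + \Kc(F^p_\alpha)$ is invertible in $\Ac^p/\Kc(F^p_\alpha)$.

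I do not expect a deep obstacle once the correspondence $A \leftrightarrow \tilde A = AP+(I-P)$ is in place, since the substantive content is imported from $\BDO^p$. Two points need care: the auxiliary fact $P \in \BDO^p$, which is not literally stated above but is standard and follows from Proposition \ref{prop:BDO_characterization}; and — the most error-prone step — verifying in the Calkin-algebra statement that $AB-I$ and $BA-I$ are compact \emph{on $F^p_\alpha$} and not merely on $L^p_\alpha$, which is exactly where one uses that $P$ maps $L^p_\alpha$ into $F^p_\alpha$ and annihilates $\ker P$.
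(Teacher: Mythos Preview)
The paper does not supply a proof of this proposition: it is quoted verbatim from \cite[Theorem 3.10]{HaSe}, with the needed properties of $\BDO^p$ (closedness, inverse closedness, containment of compacts, inverse closedness of $\BDO^p/\Kc(L^p_\alpha)$) recorded in the surrounding text. So there is nothing in the present paper to compare against.

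Your argument is correct and is precisely the intended mechanism: pull everything back to $\BDO^p$ via the block-diagonal lift $\tilde A = AP + (I-P)$ with respect to $L^p_\alpha = F^p_\alpha \oplus \ker P$. The algebraic checks (closedness under products via $(AB)P = (AP)(BP)$, unit, norm-closedness, $\Kc(F^p_\alpha) \subset \Ac^p$, $T_fP = PM_fP \in \BDO^p$) are all fine, and your Fredholm step is clean --- the identities $P\tilde A = \tilde A P = AP$ and $\tilde A|_{F^p_\alpha} = A$ give exactly $AB - I = P(\tilde A\tilde B - I)|_{F^p_\alpha}$ and $BA - I = P(\tilde B\tilde A - I)|_{F^p_\alpha}$ as you claim. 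The two auxiliary facts you flag (compacts lie in $\BDO^p$, and $P \in \BDO^p$) are both already asserted in the paper's text immediately before the proposition and in \cite{FuHa}, so you are not smuggling anything in.
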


From the definition of band-dominated operators, it also easily follows that $A \in \Ac^p$ if and only if $A^* \in \Ac^q$, where $\frac{1}{p} + \frac{1}{q} = 1$.

\subsection{Limit operators}

Let $\beta\C^n$ denote the Stone-\v{C}ech compactification of $\C^n$. By the universal property of $\beta\C^n$, every continuous function $f \from \C^n \to K$ to a compact Hausforff space $K$ can be uniquely extended to $\beta\C^n$. In the following, we will not distinguish between a function and its extension. For $A \in \Ac^p$ consider the function $\Psi \from \C^n \to \Lc(F^p_{\alpha})$, $\Psi(z) = W_{-z}AW_z$. As bounded sets are relatively compact in the weak operator topology, $\Psi$ has a weakly continuous extension to $\beta\C^n$. It turns out that this extension is even strongly continuous.

\begin{prop} \label{prop:limit_operators}
(\cite[Proposition 5.3]{BaIs}, see also \cite[Theorem 4.11]{HaSe} and \cite[Lemma 4]{HaVi})\\
Let $A \in \Ac^p$. The map $\Psi \from \C^n \to \Lc(F^p_{\alpha})$, $\Psi(z) = W_{-z}AW_z$ extends to a strongly continuous map on $\beta\C^n$.
\end{prop}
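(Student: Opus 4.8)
The plan is to show that $\Psi$ already extends weakly continuously to $\beta\C^n$ (as noted in the text, this follows because $\set{W_{-z}AW_z : z \in \C^n}$ is bounded by $\norm{A}$ and bounded sets are relatively compact in the weak operator topology, so the extension exists and is weakly continuous by general nonsense about $\beta\C^n$), and then to upgrade this to strong continuity. Strong continuity at a point $\xi \in \beta\C^n$ means that for every net $z_\nu \to \xi$ in $\C^n$ and every $f \in F^p_\alpha$ we have $\norm{W_{-z_\nu}AW_{z_\nu}f - \Psi(\xi)f}_p \to 0$. Since we already have weak convergence $W_{-z_\nu}AW_{z_\nu}f \to \Psi(\xi)f$, and since in general norm convergence is not implied by weak convergence, the key is to exploit the band-dominated structure of $A$.

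First I would reduce to a density and uniformity statement. The family $\set{W_{-z}AW_z}$ is uniformly bounded, so it suffices to prove strong convergence on a dense subset of $F^p_\alpha$, for instance on $PM_{\1_K}F^p_\alpha$ with $K$ ranging over compact sets (these are dense since $P M_{\1_K} \to P$ strongly as $K \uparrow \C^n$, using that $P$ is bounded on $L^p_\alpha$ and compactly supported functions are dense in $L^p_\alpha$). So fix a compact set $K$ and $g = PM_{\1_K}h$ with $\norm{h}_p \le 1$. The point of localizing is this: because $A \in \Ac^p$, i.e. $AP \in \BDO^p$, Proposition \ref{prop:BDO_characterization} lets us control $A$ by band operators, and band operators have the crucial feature that $M_{\1_{K'}}(AP)M_{\1_K} = 0$ once $\dist(K,K')$ exceeds the propagation. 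Concretely, approximating $AP$ in norm by a band operator $B$ of propagation $\le R$, one gets that $W_{-z}APW_z M_{\1_K}$ is, up to an error uniformly small in $z$, supported (on the left) in a fixed-radius neighborhood of $-z + K$; combined with the fact that $W_z$ intertwines $M_{\1_S}$ with $M_{\1_{S+z}}$ (i.e. $W_{-z}M_{\1_S}W_z = M_{\1_{S-z}}$), one sees that the "mass" of $W_{-z}AW_z g$ lives in a fixed-radius set that merely translates with $z$. The compactness of $PM_{\1_{K'}}$ and $M_{\1_{K'}}P$ for compact $K'$ then turns weak convergence into norm convergence on these localized pieces.

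The cleanest way to organize the last step is via a tightness/no-escape-of-mass argument: show that $\sup_\nu \norm{M_{\1_{\C^n \setminus B(c_\nu, \rho)}} W_{-z_\nu} A W_{z_\nu} g}_p$ can be made arbitrarily small by taking $\rho$ large, where $c_\nu$ is the (bounded, hence WLOG convergent) recentering coming from $K$ — in fact after the Weyl conjugation the relevant window is stationary, so this is just: the mass of $W_{-z_\nu}AW_{z_\nu}g$ outside a fixed large ball is uniformly small. Inside that fixed ball $B(0,\rho)$, the operator $f \mapsto M_{\1_{B(0,\rho)}} f$ composed with the weakly convergent net $W_{-z_\nu}AW_{z_\nu}g$ converges in norm, because weak convergence in $F^p_\alpha$ together with a compact operator (here $M_{\1_{B(0,\rho)}}P$, whose compactness is quoted in the excerpt right after \eqref{eq:projection}) yields norm convergence of the images. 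Splitting $\norm{W_{-z_\nu}AW_{z_\nu}g - \Psi(\xi)g}_p$ into the in-ball part and the out-of-ball part, letting $\nu \to \infty$ and then $\rho \to \infty$, gives the claim; the analogous estimate for $\Psi(\xi)g$ itself follows by lower semicontinuity of the out-of-ball norm under weak limits.

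The main obstacle I anticipate is making the "mass stays in a translating window" statement fully rigorous in the $L^p$/Banach setting: one must pass from the norm approximation of $AP$ by a band operator $B$ (Proposition \ref{prop:BDO_characterization}, which is phrased via the cutoffs $\varphi_{j,t}$ rather than via a single propagation bound) to a genuine finite-propagation object, keeping all error terms uniform in $z$, and then track how $M_{\1_K}$ and the conjugation by $W_z$ interact. This is routine for $p = 2$ but needs the cutoff-function machinery of Section 2.2 here; I would expect the proof to invoke Proposition \ref{prop:BDO_characterization} to write, for any $\eps > 0$, $A P = \sum_j M_{\varphi_{j,t}} (AP) M_{\psi_{j,t}} + (\text{error of norm} < \eps)$ with the $\psi_{j,t}$ a slightly fattened partition, the point being that each term in the sum has uniformly bounded propagation, and then conjugate. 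Everything else — density reductions, compactness of $PM_{\1_{K'}}$, weak-plus-compact implies norm — is standard and I would not belabor it.
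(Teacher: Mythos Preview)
The paper does not give its own proof of this proposition; it is quoted from \cite[Proposition 5.3]{BaIs}, \cite[Theorem 4.11]{HaSe} and \cite[Lemma 4]{HaVi}. Your outline is correct and is essentially the standard argument one finds in those references: approximate $AP \in \BDO^p$ in norm by a band operator, use the identity $W_{-z}M_{\1_S}W_z = M_{\1_{S-z}}$ to see that $W_{-z}(AP)W_z$ applied to a compactly supported vector stays supported in a fixed compact set uniformly in $z$ (up to the approximation error), and then invoke the compactness of $M_{\1_K}P$ to convert weak convergence into norm convergence on the in-ball part. The density step is fine once you note that $W_zP = PW_z$ gives $W_{-z_\nu}AW_{z_\nu}(PM_{\1_K}h) = W_{-z_\nu}(AP)W_{z_\nu}M_{\1_K}h$, so the compactly supported vector lands directly to the right of $AP$. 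The only cosmetic point is that you do not actually need the cutoff machinery of Proposition \ref{prop:BDO_characterization} here; the plain definition $\BDO^p = \overline{\BO}$ already hands you a band operator $B$ with $\norm{AP-B}<\eps$ and a finite propagation bound, which is all your tightness estimate uses.
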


Most of the time, Proposition \ref{prop:limit_operators} is used in the following form. Let $x \in \beta\C^n \setminus \C^n$ and let $(z_{\gamma})$ be a net in $\C^n$ that converges to $x$. Then the strong limit
\begin{equation} \label{eq:strong_limit}
A_x := \slim\limits_{z_{\gamma} \to x} W_{-z_{\gamma}}AW_{z_{\gamma}}
\end{equation}
exists and does not depend on the net $(z_{\gamma})$. The operators $A_x$ are called the limit operators of $A$. We will say that a net $(A_{\gamma})$ converges $*$-strongly to $A$, $A_{\gamma} \starto A$ in short, if both $(A_{\gamma}) \to A$ and $(A_{\gamma}^*) \to A^*$ strongly. In particular, the convergence in \eqref{eq:strong_limit} is $*$-strong and $A_x^* = (A^*)_x$.

A direct computation shows $W_{-z}M_fW_z = M_{f(\cdot+z)} = M_{f \circ \tau_{-z}}$ for $z \in \C^n$. Therefore, as the Weyl operators commute with $P$, we get
\[(T_f)_x = \slim\limits_{z_{\gamma} \to x} T_{f(\cdot+z_{\gamma})}\]
if we apply \eqref{eq:strong_limit} to a Toeplitz operator $T_f$. In particular, we can see that the limit operators of $T_f$ only depend on the values of $f$ close to infinity.

The most important feature of limit operators is the following.

\begin{prop} \label{prop:limit_operator_characterization}
(\cite[Theorem 1.1, Lemma 6.1]{BaIs}, \cite[Theorem 28]{FuHa}, see also \cite[Corollary 4.24, Theorem 4.38]{HaSe})\\
Let $A \in \Ac^p$. Then
\begin{itemize}
	\item[(a)] $A$ is compact if and only if $A_x = 0$ for all $x \in \beta\C^n \setminus \C^n$.
	\item[(b)] $A$ is Fredholm if and only if $A_x$ is invertible for all $x \in \beta\C^n \setminus \C^n$.
\end{itemize}
\end{prop}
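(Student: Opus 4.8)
The plan is to reduce both statements to the limit operator theory for band-dominated operators from \cite{BaIs,HaSe}, the only genuinely new ingredient being the Fock-specific fact that $PM_{\1_K}$ and $M_{\1_K}P$ are compact for compact $K$. To set this up, I would put $Q_R := PM_{\1_{\overline{B(0,R)}}}$, viewed as an operator on $F^p_{\alpha}$. Each $Q_R$ is compact, and one checks directly (using $\1_{\overline{B(0,R)}}g \to g$ in $L^p_{\alpha}$ and boundedness of $P$, together with the analogous statement for the adjoint) that $Q_R \to I$ strongly on $F^p_{\alpha}$ and $Q_R^* \to I$ strongly on $F^q_{\alpha}$. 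The key preliminary observation is then: an operator $C \in \Lc(F^p_{\alpha})$ satisfies $\norm{C - CQ_R} \to 0$ and $\norm{C - Q_RC} \to 0$ \emph{if and only if} $C$ is compact --- the "only if" is clear since $CQ_R$ is compact, and the "if" follows by approximating $C$ by finite-rank operators and using the strong convergence of $Q_R$ and $Q_R^*$. Thus on $F^p_{\alpha}$ the notion of an operator being "$\Pc$-compact" (compact relative to the approximate identity $(Q_R)$) coincides with ordinary compactness, and likewise "$\Pc$-Fredholm" (invertible modulo $\Pc$-compact operators) coincides with Fredholm; this is exactly what allows the $L^p(\C^n)$-style limit operator results to be imported verbatim.

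For part (a), the forward implication is elementary: the Weyl operators tend weakly to $0$ at infinity, i.e. $\sp{W_zf}{g} \to 0$ as $\abs{z} \to \infty$ for all $f \in F^p_{\alpha}$, $g \in F^q_{\alpha}$ (reduce, by density and $\norm{W_z} = 1$, to $f,g$ polynomials, where $\sp{W_zf}{g}$ is controlled by an integral of $\abs{f(\cdot-z)}e^{-\frac{\alpha}{2}\abs{\cdot-z}^2}$ against a fixed $L^1$ function, so dominated convergence applies). Hence, if $A$ is compact and $z_{\gamma} \to x \in \beta\C^n \setminus \C^n$, then $W_{z_{\gamma}}f \to 0$ weakly, so $\norm{AW_{z_{\gamma}}f}_p \to 0$, and since $W_{-z_{\gamma}}$ is isometric, $W_{-z_{\gamma}}AW_{z_{\gamma}}f \to 0$ in norm; applying the same to $A^* \in \Lc(F^q_{\alpha})$ (which is again compact) gives $A_x = 0$. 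For the converse, suppose $A_x = 0$ for all $x$. By the limit operator characterization of $\Pc$-compactness for band-dominated operators (\cite{BaIs}, see also \cite{HaSe}) --- whose proof localizes, via Proposition \ref{prop:BDO_characterization}, a sequence of unit vectors whose mass escapes to infinity and extracts from it a nonzero limit operator --- the vanishing of all limit operators forces $\norm{A - AQ_R} \to 0$ and $\norm{A - Q_RA} \to 0$, so $A$ is compact by the observation of the first paragraph.

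For part (b), the forward implication uses inverse closedness of the Calkin algebra $\Ac^p/\Kc(F^p_{\alpha})$ (Proposition \ref{prop:A^p_properties}): if $A$ is Fredholm there is $A' \in \Ac^p$ with $AA' - I, A'A - I \in \Kc(F^p_{\alpha})$. The assignment $B \mapsto B_x$ is multiplicative on $\Ac^p$ --- the product of two uniformly bounded $*$-strongly convergent nets converges $*$-strongly to the product, by the standard $\varepsilon/2$ argument applied to each factor and again to the adjoints --- and it sends $I$ to $I$ and, by part (a), compact operators to $0$. Evaluating at $x$ yields $A_xA'_x = A'_xA_x = I$, so $A_x$ is invertible. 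For the converse, assume every $A_x$ is invertible; the real work is to upgrade this to a \emph{uniform} bound $\sup_x \norm{A_x^{-1}} < \infty$. Here I would invoke the self-similarity of the limit operator calculus: a limit operator of $A_x$ is again a limit operator of $A$ (\cite{BaIs,HaSe}). Combined with a diagonal/compactness argument over $\beta\C^n \setminus \C^n$, this rules out a sequence $x_m$ with $\norm{A_{x_m}^{-1}} \to \infty$, since such a sequence would produce in the limit a point $x_0$ with $A_{x_0}$ not bounded below, contradicting invertibility of $A_{x_0}$. Once the uniform bound is in hand, one patches the local inverses $A_x^{-1}$ together along the cutoff functions $\varphi_{j,t}$ for small $t$ to obtain a band-dominated parametrix $A' \in \Ac^p$ with $AA' - I$ and $A'A - I$ being $\Pc$-compact, hence compact by the first paragraph; so $A$ is Fredholm.

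The genuinely hard step is the uniform invertibility of the limit operators in the converse of (b); everything else is either elementary or a direct transcription of the band-dominated operator theory of \cite{BaIs,HaSe,FuHa} to the Fock space, made possible by the compactness of $PM_{\1_K}$. In particular, on the Fock space this whole circle of ideas is essentially \cite[Theorem 1.1]{BaIs} together with \cite[Theorem 28]{FuHa}.
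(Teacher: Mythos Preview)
The paper does not actually prove this proposition: it is stated with citations to \cite{BaIs,FuHa,HaSe} and is followed only by remarks explaining why those references apply (the different compactification used there, and the a priori distinction between $\Tc^p$ and $\Ac^p$). So there is no ``paper's own proof'' to compare against beyond the decision to quote the literature.

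Your sketch is a reasonable reconstruction of how those references proceed, and you correctly isolate the Fock-specific input (compactness of $PM_{\1_K}$, which collapses $\Pc$-compactness to ordinary compactness --- this is essentially Proposition~\ref{prop:compact_P_compact}). Part (a) and the forward direction of (b) are fine as written. For the converse of (b), your account of the uniform-boundedness step is imprecise: the assertion that a sequence with $\norm{A_{x_m}^{-1}} \to \infty$ ``would produce in the limit a point $x_0$ with $A_{x_0}$ not bounded below'' does not follow from $*$-strong convergence alone, since lower bounds are not upper semicontinuous under strong limits. The actual argument in \cite{HaSe} (in the spirit of Lindner--Seidel) is more delicate and uses the band-dominated structure to localize near-null vectors before passing to the limit. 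That said, you explicitly flag this as the genuinely hard step and defer to the references --- which is precisely what the paper itself does --- so in substance your treatment is aligned with the paper's.
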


There are two things to note here. First of all, in \cite{BaIs,FuHa} a different compactification (and sign convention) was used. As already noted in \cite[Section 5]{HaSe} and \cite[Remark 3]{HaVi}, this is not an issue because in any case the closure of the set $\set{W_{-z}AW_z : z \in \C^n}$ in the strong operator topology is considered. We could even replace the nets by sequences because bounded sets are metrizable in the strong operator topology. We will use this fact in Proposition \ref{prop:sequential_convergence} and Theorem \ref{thm:EssComBDO} below. However, as the points in $\beta\C^n \setminus \C^n$ cannot be reached by sequences, this comes with additional (mainly notational) difficulties. We therefore stick to nets and use the Stone-\v{C}ech compactification for simplicity.

The other thing to note is that \cite{BaIs,FuHa} only consider the Toeplitz algebra $\Tc^p$, which could possibly be smaller than $\Ac^p$. However, one of the main results of this paper is that actually $\Ac^p = \Tc^p$ for all $p \in (1,\infty)$, see Theorem \ref{thm:all_algebras_coincide} below. To avoid a possibly circular argument, we also refer to \cite{HaSe}, where Proposition \ref{prop:limit_operator_characterization} was shown (in a much more general context) for $A \in \Ac^p$ directly (also using the Stone-\v{C}ech compactification for that matter).

We conclude this section with the following well-known fact, which is particularly useful for us because it turns the strong convergence in \eqref{eq:strong_limit} into norm convergence when multiplied with a compact operator. In \cite{RaRoSi} this is proven for sequences, but the same proof also works for bounded nets.

\begin{prop} \label{prop:compact_convergence}
(\cite[Theorem 1.1.3]{RaRoSi})\\
Let $E$ be a Banach space, $A \in \Lc(E)$ and $(A_{\gamma})$ a bounded net in $\Lc(E)$. If $(A_{\gamma})$ converges strongly to $A$, then $\norm{A_{\gamma}K-AK} \to 0$ for all $K \in \Kc(E)$. Similarly, if $(A_{\gamma}^*)$ converges strongly to $A^*$, then $\norm{KA_{\gamma}-KA} \to 0$ for all $K \in \Kc(E)$.
\end{prop}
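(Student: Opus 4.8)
The plan is to reduce the first assertion to the statement that $A_\gamma$ converges to $A$ \emph{uniformly on the compact set} $\overline{K B_E}$, where $B_E$ denotes the closed unit ball of $E$; this set is compact precisely because $K$ is a compact operator. Write $M := \sup_\gamma \norm{A_\gamma} < \infty$ (the net is assumed bounded) and recall that $A \in \Lc(E)$, so $\norm{A}$ is finite. Fix $\eps > 0$. By compactness of $\overline{K B_E}$ we may choose finitely many vectors $x_1, \dots, x_m \in B_E$ such that every $x \in B_E$ satisfies $\norm{K x - K x_i} < \eps$ for some $i \in \set{1,\dots,m}$.

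Next I would exploit strong convergence at the finitely many vectors $K x_i$: for each $i$ there is an index $\gamma_i$ with $\norm{A_\gamma K x_i - A K x_i} < \eps$ whenever $\gamma \geq \gamma_i$, and since the index set is directed we may pick a single $\gamma_0$ dominating $\gamma_1, \dots, \gamma_m$. For $\gamma \geq \gamma_0$ and arbitrary $x \in B_E$, choosing $i$ as above, the triangle inequality gives
\begin{align*}
\norm{A_\gamma K x - A K x} &\leq \norm{A_\gamma}\,\norm{K x - K x_i} + \norm{A_\gamma K x_i - A K x_i} + \norm{A}\,\norm{K x_i - K x}\\
&\leq (M + 1 + \norm{A})\,\eps.
\end{align*}
Taking the supremum over $x \in B_E$ yields $\norm{A_\gamma K - A K} \leq (M + 1 + \norm{A})\eps$ for all $\gamma \geq \gamma_0$, which is the first claim.

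For the second assertion I would pass to adjoints. Since $K \in \Kc(E)$, Schauder's theorem gives $K^* \in \Kc(E^*)$; moreover $(A_\gamma^*)$ is bounded (with $\norm{A_\gamma^*} = \norm{A_\gamma}$) and converges strongly to $A^*$ by hypothesis. Applying the first part to the Banach space $E^*$, the net $(A_\gamma^*)$, the limit $A^*$ and the compact operator $K^*$ gives $\norm{A_\gamma^* K^* - A^* K^*} \to 0$. Since $(K A_\gamma - K A)^* = (A_\gamma - A)^* K^* = A_\gamma^* K^* - A^* K^*$ and taking adjoints is isometric, we conclude $\norm{K A_\gamma - K A} \to 0$.

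The argument is essentially routine; the only points requiring care are that the index set of the net is directed (so that a common $\gamma_0$ exists — this is exactly where one must be a touch more careful than in the sequential case, although nothing substantial changes), and the invocation of Schauder's theorem to transfer compactness to the dual space. I do not expect a genuine obstacle here: the result is a soft consequence of the compactness of $K$ combined with the uniform boundedness of the net.
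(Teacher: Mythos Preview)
Your argument is correct and is the standard proof of this well-known fact. The paper does not supply its own proof but merely cites \cite[Theorem 1.1.3]{RaRoSi}, remarking that the sequential proof there carries over to bounded nets; your write-up is exactly that adaptation, and you correctly flag directedness of the index set as the one place where the net case requires a moment's thought.
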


\subsection{\texorpdfstring{$\Pc$}{P}-theory}

When working on the ambient space $L^p_{\alpha}$, it will prove useful to generalize the notions of compactness and strong convergence. The following notions originate from the theory of Banach space valued sequence spaces (see \cite{Lindner,RaRoSi}, for example). The $\Pc$ stands for \emph{projection} and is referring to the projections $M_{\1_{B(0,r)}}$, $r > 0$. An operator $A \in \Lc(L^p_{\alpha})$ is called $\Pc$-compact if
\[\lim\limits_{r \to \infty} \norm{M_{\1_{B(0,r)}}A-A} = \lim\limits_{r \to \infty} \norm{AM_{\1_{B(0,r)}}-A} = 0.\]
Equivalently, $A$ is $\Pc$-compact if and only if
\[\lim\limits_{r \to \infty} \norm{M_{\1_{B(0,r)}}AM_{\1_{B(0,r)}} - A} = 0.\]
The connection between compactness and $\Pc$-compactness is readily seen:

\begin{prop} \label{prop:compact_P_compact}
Every compact operator is $\Pc$-compact. On the other hand, if $A \in \Lc(L^p_{\alpha})$ is $\Pc$-compact, then $AP$ and $PA$ are compact.
\end{prop}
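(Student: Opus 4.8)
The plan is to derive both assertions from facts already in hand: the compactness of $PM_{\1_K}$ and $M_{\1_K}P$ for compact $K$ (recalled just after \eqref{eq:projection}), Proposition \ref{prop:compact_convergence}, and the elementary observation that the cutoff multipliers converge strongly to the identity. For the latter, note that $\norm{M_{\1_{B(0,r)}}} \leq 1$ for every $r > 0$, and for each fixed $f \in L^p_{\alpha}$ dominated convergence gives $\norm{M_{\1_{B(0,r)}}f - f}_p \to 0$ as $r \to \infty$. Since $\1_{B(0,r)}$ is real-valued, the adjoint of $M_{\1_{B(0,r)}}$ under the duality pairing is again multiplication by $\1_{B(0,r)}$, now acting on $L^q_{\alpha}$, and the same argument shows strong convergence there; hence $(M_{\1_{B(0,r)}})_{r > 0}$ is a bounded net converging $*$-strongly to the identity operator.

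For the first statement I would apply Proposition \ref{prop:compact_convergence} to this net with $A_{\gamma} = M_{\1_{B(0,r)}}$ and $A$ the identity: it yields $\norm{M_{\1_{B(0,r)}}K - K} \to 0$ and $\norm{KM_{\1_{B(0,r)}} - K} \to 0$ for every $K \in \Kc(L^p_{\alpha})$, which is precisely the definition of $\Pc$-compactness of $K$. For the converse, let $A \in \Lc(L^p_{\alpha})$ be $\Pc$-compact. Since $M_{\1_{B(0,r)}} = M_{\1_{\overline{B(0,r)}}}$ as operators on $L^p_{\alpha}$ and $\overline{B(0,r)}$ is compact, $PM_{\1_{B(0,r)}}$ is compact, hence so is $PM_{\1_{B(0,r)}}A$ for each $r$. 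From $\norm{M_{\1_{B(0,r)}}A - A} \to 0$ we get $\norm{PM_{\1_{B(0,r)}}A - PA} \leq \norm{P}\,\norm{M_{\1_{B(0,r)}}A - A} \to 0$, so $PA$ is a norm limit of compact operators and therefore compact. Symmetrically, $M_{\1_{B(0,r)}}P$ is compact, $AM_{\1_{B(0,r)}}P$ is compact, and $\norm{AM_{\1_{B(0,r)}} - A} \to 0$ forces $\norm{AM_{\1_{B(0,r)}}P - AP} \to 0$, so $AP$ is compact.

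The argument is essentially bookkeeping, and I do not expect a genuine obstacle. The two points requiring a little care are: checking the $*$-strong convergence of the cutoffs so that Proposition \ref{prop:compact_convergence} applies in both one-sided forms (on $L^p_{\alpha}$ and on its dual $L^q_{\alpha}$), and passing from $B(0,r)$ to its closure in order to legitimately invoke the cited compactness of $PM_{\1_K}$ and $M_{\1_K}P$.
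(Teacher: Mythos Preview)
Your proof is correct and follows essentially the same approach as the paper: both use the $*$-strong convergence of $M_{\1_{B(0,r)}}$ to the identity together with Proposition~\ref{prop:compact_convergence} for the first direction, and for the converse approximate $AP$ and $PA$ in norm by $AM_{\1_{B(0,r)}}P$ and $PM_{\1_{B(0,r)}}A$, invoking the compactness of $M_{\1_K}P$ and $PM_{\1_K}$ for compact $K$. You supply a bit more detail (the dominated convergence justification and the open-versus-closed ball remark), but the argument is the same.
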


\begin{proof}
Let $A \in \Kc(L^p_{\alpha})$. $M_{\1_{B(0,r)}}$ converges $*$-strongly to the identity for $r \to \infty$ and therefore the $\Pc$-compactness of $A$ follows from Proposition \ref{prop:compact_convergence}. Conversely, if $A$ is $\Pc$-compact, then $\norm{AM_{\1_{B(0,r)}}P-AP} \to 0$ as $r \to \infty$. That is, $AP$ is approximated by the compact operators $AM_{\1_{B(0,r)}}P$ and is therefore compact itself. Similarly, $PA$ is compact.
\end{proof}

Replacing compact by $\Pc$-compact operators, we can also define the $\Pc$-essential norm:
\[\norm{A}_{\Pc} := \inf\limits_K \norm{A+K},\]
where we take the infimum over all $\Pc$-compact operators $K$. In the same way we can amend the notion of ($*$-)strong convergence. We say that a bounded sequence of operators $(A_n)_{n \in \N}$ converges $\Pc$-strongly to $A \in \Lc(L^p_{\alpha})$, $A_n \pto A$ in short, if
\[\lim\limits_{n \to \infty} \norm{(A_n-A)M_{\1_{B(0,r)}}} = \lim\limits_{n \to \infty} \norm{M_{\1_{B(0,r)}}(A_n-A)} = 0\]
for all $r > 0$. The connection between $*$-strong and $\Pc$-strong convergence is as follows:

\begin{prop} \label{prop:strong_convergence_P_convergence}
$\Pc$-strong convergence implies $*$-strong convergence. Moreover, if $A_n \starto A$, then $PA_nP \pto PAP$.
\end{prop}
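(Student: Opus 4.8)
The plan is to treat the two assertions separately, both by elementary approximation arguments. For the first, I would start from a bounded sequence with $A_n \pto A$, so that $C := \sup_n \norm{A_n}$ is finite. Fixing $f \in L^p_{\alpha}$ and $\eps > 0$, I would pick $r > 0$ with $\norm{f - M_{\1_{B(0,r)}}f}_p < \eps$ — possible because $M_{\1_{B(0,r)}}f \to f$ in $L^p_{\alpha}$ as $r \to \infty$ by dominated convergence — and then split $(A_n - A)f$ as $(A_n - A)M_{\1_{B(0,r)}}f + (A_n - A)(f - M_{\1_{B(0,r)}}f)$. The first term is bounded by $\norm{(A_n - A)M_{\1_{B(0,r)}}}\,\norm{f}_p$, which vanishes as $n \to \infty$ by hypothesis, and the second by $(C + \norm{A})\eps$; hence $\limsup_n \norm{(A_n - A)f} \le (C + \norm{A})\eps$, and since $\eps$ was arbitrary, $A_n f \to A f$. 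Because $M_{\1_{B(0,r)}}$ is self-adjoint, the second half of the definition of $\pto$ says exactly that $\norm{(A_n^* - A^*)M_{\1_{B(0,r)}}} \to 0$, so repeating the same estimate on $L^q_{\alpha}$ gives $A_n^* g \to A^* g$ for every $g \in L^q_{\alpha}$. Together these say $A_n \starto A$.

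For the second assertion I would assume $A_n \starto A$ and verify the two norm limits defining $\pto$ for the sequence $PA_nP$, which is bounded since $\norm{PA_nP} \le \norm{P}^2\norm{A_n}$. The key observation is that for each fixed $r > 0$ the operators $PM_{\1_{B(0,r)}}$ and $M_{\1_{B(0,r)}}P$ are compact — this is the fact recalled after \eqref{eq:projection}, applied to the compact set $\overline{B(0,r)}$ (which differs from $B(0,r)$ only by a Lebesgue-null set and so defines the same multiplication operator). Writing
\[ (PA_nP - PAP)M_{\1_{B(0,r)}} = P\,(A_n - A)\,\bigl(PM_{\1_{B(0,r)}}\bigr) \]
and using the strong convergence $A_n \to A$ together with Proposition \ref{prop:compact_convergence} gives $\norm{(A_n - A)PM_{\1_{B(0,r)}}} \to 0$, hence $\norm{(PA_nP - PAP)M_{\1_{B(0,r)}}} \to 0$. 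Symmetrically,
\[ M_{\1_{B(0,r)}}(PA_nP - PAP) = \bigl(M_{\1_{B(0,r)}}P\bigr)\,(A_n - A)\,P, \]
and now the relevant input is $A_n^* \to A^*$ strongly, so the second statement of Proposition \ref{prop:compact_convergence} yields $\norm{M_{\1_{B(0,r)}}P(A_n - A)} \to 0$, hence the second limit. This gives $PA_nP \pto PAP$.

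I do not expect a genuine obstacle here: the whole argument is essentially bookkeeping. The two small points to watch are using the correct half of Proposition \ref{prop:compact_convergence} on each side (a compact factor on the left paired with strong convergence of $A_n$, versus a compact factor on the right paired with strong convergence of $A_n^*$), and remembering that $\Pc$-strong and $*$-strong convergence of a \emph{sequence} automatically come with a uniform norm bound, which is what makes the approximation estimate in the first part go through.
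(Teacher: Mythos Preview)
Your proposal is correct and follows essentially the same approach as the paper: the paper's proof simply says ``the first statement is clear and the second follows again from Proposition~\ref{prop:compact_convergence}'', and you have spelled out exactly the details behind that sentence (density of compactly supported functions for the first part, compactness of $PM_{\1_{B(0,r)}}$ and $M_{\1_{B(0,r)}}P$ combined with Proposition~\ref{prop:compact_convergence} for the second).
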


\begin{proof}
The first statement is clear and the second follows again from Proposition \ref{prop:compact_convergence}.
\end{proof}

We also note that if $A_n \pto 0$ and $r > 0$, then
\[\norm{M_{\1_{\C^n \setminus B(x_0,r)}}A_nM_{\1_{\C^n \setminus B(x_0,r)}} - A_n} \leq \norm{M_{\1_{B(x_0,r)}}A_n} + \norm{M_{\1_{\C^n \setminus B(x_0,r)}}A_nM_{\1_{B(x_0,r)}}} \to 0\]
as $n \to \infty$. The following is a $\Pc$-theory generalization of \cite[Lemma 2.1]{MuXi}.

\begin{lem} \label{lem:P_theory_lemma}
Let $(K_n)_{n \in \N}$ be a bounded sequence of $\Pc$-compact operators such that $K_n \pto 0$. Then there is a strictly increasing sequence $(n_k)_{k \in \N}$ of positive integers such that $B := \sum\limits_{k = 1}^{\infty} K_{n_k}$ is a $\Pc$-strongly convergent series and
\[\norm{B}_{\Pc} = \limsup\limits_{k \to \infty} \norm{K_{n_k}}.\]
\end{lem}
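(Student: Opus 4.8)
The plan is a gliding-hump (diagonal) extraction that makes the chosen operators almost supported on pairwise disjoint annuli. I would build radii $0 =: R_0 < R_1 < R_2 < \cdots$ with $R_k \to \infty$ and indices $n_1 < n_2 < \cdots$ simultaneously by induction, and write $E_k := M_{\1_{B(0,R_k)}} - M_{\1_{B(0,R_{k-1})}} = M_{\1_{B(0,R_k) \setminus B(0,R_{k-1})}}$ for the cutoff onto the $k$-th annulus. At the $k$-th step I would first use $K_n \pto 0$ to choose $n_k > n_{k-1}$ with both $\norm{M_{\1_{B(0,R_{k-1})}}K_{n_k}}$ and $\norm{K_{n_k}M_{\1_{B(0,R_{k-1})}}}$ less than $2^{-k}$, and then use that the finitely many operators $K_{n_1},\dots,K_{n_k}$ are all $\Pc$-compact to choose $R_k > \max\set{R_{k-1},k}$ so large that $\norm{(I - M_{\1_{B(0,R_k)}})K_{n_j}}$ and $\norm{K_{n_j}(I - M_{\1_{B(0,R_k)}})}$ are all less than $2^{-k}$ for every $j \le k$ --- that is, re-localizing every operator chosen so far, not only the newest one. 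Expanding $E_k$ and combining the step-$k$ bounds with $\norm{M_{\1_S}} \le 1$ then gives $\norm{K_{n_k} - E_kK_{n_k}E_k} \le \delta_k$ with $\delta_k \le 5\cdot 2^{-k}$, so $\sum_k\delta_k < \infty$. For the cross terms: if $j > k$ the $k$-th annulus lies inside $B(0,R_{j-1})$, hence $\norm{E_kK_{n_j}E_k} \le \norm{K_{n_j}M_{\1_{B(0,R_{j-1})}}} < 2^{-j}$; if $j < k$ the $k$-th annulus is disjoint from $B(0,R_{k-1}) \supseteq B(0,R_j)$, hence $\norm{E_kK_{n_j}E_k} \le \norm{(I - M_{\1_{B(0,R_{k-1})}})K_{n_j}} < 2^{-(k-1)}$ by the re-localization performed at the previous step. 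Summing, $\sum_{j\ne k}\norm{E_kK_{n_j}E_k} \le \nu_k := 2^{-k} + (k-1)2^{-(k-1)}$, and $\nu_k \to 0$.

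Next I would check $\Pc$-strong convergence. Write $\sum_k K_{n_k} = \sum_k E_kK_{n_k}E_k + \sum_k(K_{n_k} - E_kK_{n_k}E_k)$; the second series converges in operator norm since $\sum_k\delta_k < \infty$, and the first converges strongly to a bounded operator with norm $\le \sup_k\norm{K_{n_k}}$, because the $E_k$ are multiplications by characteristic functions of pairwise disjoint sets, so for $f \in L^p_{\alpha}$ the vectors $E_kK_{n_k}E_kf$ have pairwise disjoint supports, $\sum_k\norm{E_kf}_p^p \le \norm{f}_p^p$, and hence $\sum_k\norm{E_kK_{n_k}E_kf}_p^p \le (\sup_k\norm{K_{n_k}})^p\norm{f}_p^p$. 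Call the sum $B$. For fixed $r > 0$ pick $m_0$ with $R_{m_0} \ge r$; for $m \ge m_0$ the annuli of $E_k$ with $k > m$ miss $B(0,r)$, so $M_{\1_{B(0,r)}}\bigl(B - \sum_{k=1}^m K_{n_k}\bigr) = M_{\1_{B(0,r)}}\sum_{k>m}(K_{n_k} - E_kK_{n_k}E_k)$ has norm $\le \sum_{k>m}\delta_k \to 0$, and symmetrically for multiplication on the right; this is precisely $\Pc$-strong convergence to $B$.

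For the norm identity put $L := \limsup_k\norm{K_{n_k}}$. Since $\sum_{k=1}^N K_{n_k}$ is a finite sum of $\Pc$-compact operators, hence $\Pc$-compact, and $B - \sum_{k=1}^N K_{n_k} = \sum_{k>N}E_kK_{n_k}E_k + \sum_{k>N}(K_{n_k} - E_kK_{n_k}E_k)$ has norm $\le \sup_{k>N}\norm{K_{n_k}} + \sum_{k>N}\delta_k$, letting $N\to\infty$ gives $\norm{B}_{\Pc} \le L$. Conversely, let $K$ be $\Pc$-compact and $\eps > 0$, and pick $\rho$ with $\norm{(I - M_{\1_{B(0,\rho)}})K} < \eps$. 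For all large $k$ (those with $R_{k-1} \ge \rho$) the $k$-th annulus lies in $\C^n \setminus B(0,\rho)$, so $\norm{E_kKE_k} \le \norm{(I - M_{\1_{B(0,\rho)}})K} < \eps$; since the $E_k$ are idempotent with $E_jE_k = 0$ for $j \ne k$, the two series for $B$ give $E_kBE_k = \sum_j E_kK_{n_j}E_k$, so
\[ \norm{B+K} \ge \norm{E_k(B+K)E_k} \ge \norm{E_kK_{n_k}E_k} - \nu_k - \eps \ge \norm{K_{n_k}} - \delta_k - \nu_k - \eps. \]
Taking $\limsup$ over $k$ gives $\norm{B+K} \ge L - \eps$, hence $\norm{B}_{\Pc} = \inf_K\norm{B+K} \ge L$, and together with the first bound $\norm{B}_{\Pc} = L$.

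The step I expect to be the main obstacle is the lower bound, specifically the estimate of the cross terms $E_kK_{n_j}E_k$ with $j < k$: the plain localization of $K_{n_j}$ at stage $j$ bounds each of these only by $2^{-j}$, and there are $k-1$ of them, so a naive count loses an $O(1)$ error and would give only $\norm{B}_{\Pc} \ge L - O(1)$. This is exactly why the construction must re-localize every previously chosen operator at each step: it sharpens the bound on the $j < k$ terms to $2^{-(k-1)}$, and since $(k-1)2^{-(k-1)} \to 0$, the aggregate error $\nu_k$ still vanishes, so compressing $B + K$ to the $k$-th annulus genuinely recovers $\norm{K_{n_k}}$ in the limit.
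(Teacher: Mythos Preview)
Your proof is correct and follows the same gliding-hump strategy as the paper: extract $(n_k)$ and radii $(R_k)$ so that the $K_{n_k}$ are, up to summable error, supported on pairwise disjoint annuli, and then read off $\norm{B}_{\Pc}$ from the resulting block structure. The only difference is in packaging: the paper first forms the exactly block-diagonal auxiliary $A := \sum_k E_kK_{n_k}E_k$, proves $\norm{A}_{\Pc} = \limsup_k\norm{E_kK_{n_k}E_k}$ directly from the block structure via $\norm{(A+K)M_{\1_{\C^n\setminus B(0,R_{k-1})}}}\ge\norm{E_kK_{n_k}E_k}-\norm{KM_{\1_{\C^n\setminus B(0,R_{k-1})}}}$, and then transfers to $B$ using $\norm{B-A}\le\sum_k\delta_k$; because $A$ is genuinely block-diagonal the paper never needs your re-localization of the earlier $K_{n_j}$ at step $k$, whereas that re-localization is precisely what lets you skip $A$ and control the cross terms in $E_kBE_k$ directly.
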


\begin{proof}
Let $\epsilon > 0$. By $\Pc$-compactness, we have
\[\lim\limits_{r \to \infty} \norm{M_{\1_{B(x_0,r)}}K_nM_{\1_{B(x_0,r)}} - K_n} = 0\]
for all $n \in \N$. Hence, we may choose $r_1 > 0$ sufficiently large such that $A_1 := M_{\1_{B(x_0,r_1)}}K_1M_{\1_{B(x_0,r_1)}}$ satisfies $\norm{A_1-K_1} \leq \frac{\epsilon}{2}$. As $(K_n)_{n \in \N}$ converges $\Pc$-strongly to $0$, we have
\[\lim\limits_{n \to \infty} \norm{M_{\1_{\C^n \setminus B(x_0,r_1)}}K_nM_{\1_{\C^n \setminus B(x_0,r_1)}} - K_n} = 0.\]
Therefore, we can choose $n_2 > n_1 := 1$ such that
\[\norm{M_{\1_{\C^n \setminus B(x_0,r_1)}}K_{n_2}M_{\1_{\C^n \setminus B(x_0,r_1)}} - K_{n_2}} \leq \frac{\epsilon}{8}.\]
Next, by $\Pc$-compactness again, we can choose $r_2 > r_1$ such that
\[\norm{M_{\1_{B(x_0,r_2)}}M_{\1_{\C^n \setminus B(x_0,r_1)}}K_{n_2}M_{\1_{\C^n \setminus B(x_0,r_1)}}M_{\1_{B(x_0,r_2)}} - M_{\1_{\C^n \setminus B(x_0,r_1)}}K_{n_2}M_{\1_{\C^n \setminus B(x_0,r_1)}}} \leq \frac{\epsilon}{8},\]
which implies $\norm{A_2 - K_{n_2}} \leq \frac{\epsilon}{4}$ for $A_2 := M_{\1_{B(x_0,r_2) \setminus B(x_0,r_1)}}K_{n_2}M_{\1_{B(x_0,r_2) \setminus B(x_0,r_1)}}$. Interating this procedure, we get two strictly increasing sequences $(r_k)_{k \in \N}$ and $(n_k)_{k \in \N}$ and a sequence of operators $(A_k)_{k \in \N}$ such that
\[A_k = M_{\1_{B(x_0,r_k) \setminus B(x_0,r_{k-1})}}K_{n_k}M_{\1_{B(x_0,r_k) \setminus B(x_0,r_{k-1})}}\]
and $\norm{A_k - K_{n_k}} \leq \frac{\epsilon}{2^k}$ for all $k \in \N$. Define $A := \sum\limits_{k = 1}^{\infty} A_k$. As $A$ has block structure, it is easily seen that this series converges $\Pc$-strongly and
\[\norm{A} = \sup\limits_{k \in \N} \norm{A_k} \leq \sup\limits_{k \in \N} \norm{K_{n_k}} < \infty.\]
Moreover, $\norm{A}_{\Pc} \leq \limsup\limits_{k \to \infty} \norm{A_k}$ and
\[\norm{A+K} \geq \|(A+K)M_{\1_{\C^n \setminus B(0,r_{k-1})}}\| \geq \norm{A_k} - \|KM_{\1_{\C^n \setminus B(0,r_{k-1})}}\|\]
for all $\Pc$-compact operators $K$. Taking the limit $k \to \infty$, we get $\norm{A+K} \geq \limsup\limits_{k \to \infty} \norm{A_k}$. Therefore, the equality $\norm{A}_{\Pc} = \limsup\limits_{k \to \infty} \norm{A_k}$ follows. Now define $B := \sum\limits_{k = 1}^{\infty} K_{n_k}$. As $\norm{A_k - K_{n_k}} \leq \frac{\epsilon}{2^k}$ for all $k \in \N$, this series also converges $\Pc$-strongly and $\norm{B} \leq \norm{A} + \epsilon$. Moreover
\[\abs{\norm{B}_{\Pc} - \norm{A}_{\Pc}} \leq \norm{B-A}_{\Pc} \leq \norm{B-A} \leq \epsilon.\]
As $\epsilon$ was arbitrary, we get
\[\norm{B}_{\Pc} = \limsup\limits_{k \to \infty} \norm{A_k} = \limsup\limits_{k \to \infty} \norm{K_{n_k}}.\qedhere\]
\end{proof}

Let $\VO_b(\C^n)$ denote the set of bounded continuous functions $f \from \C^n \to \C$ of vanishing oscillation, that is,
\[\Osc_z(f) := \sup\set{\abs{f(z)-f(w)} : |w-z| \leq 1} \to 0\]
as $\abs{z} \to \infty$. Obviously, the set of continuous functions with compact support, $C_c(\C^n)$, is contained in $\VO_b(\C^n)$. For $t > 0$ and $j_0 \in \N$ we define the following sets of functions:
\[\mathscr{G}_{t,j_0} := \set{\sum\limits_{j = j_0+1}^k a_j\varphi_{j,t} : a_j \in \C, |a_j| = 1, k \in \N} \subset C_c(\C^n).\]
Note that $\norm{g}_{\infty} \leq N$ for all $g \in \mathscr{G}_{t,j_0}$, $t > 0$ and $j_0 \in \N$.

\begin{lem} \label{lem:P_theory_lemma_2}
Let $A \in \Lc(L^p)$ and assume that $[M_f,A]$ is $\Pc$-compact for all $f \in \VO_b(\C^n)$. Then for every $\epsilon > 0$, there is a $t > 0$ and a $j_0 \in \N$ such that for all $g \in \mathscr{G}_{j_0,t}$ we have $\norm{[M_g,A]} < \epsilon$.
\end{lem}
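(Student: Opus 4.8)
The plan is to argue by contradiction. Assume there is an $\epsilon > 0$ such that for every $t > 0$ and every $j_0 \in \N$ some $g \in \mathscr{G}_{t,j_0}$ satisfies $\norm{[M_g, A]} \geq \epsilon$. I will use these ``bad'' functions to build a single $f \in \VO_b(\C^n)$ for which $[M_f, A]$ is not $\Pc$-compact, contradicting the hypothesis.

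\emph{Step 1 (a localization estimate).} Since $C_c(\C^n) \subseteq \VO_b(\C^n)$, the hypothesis gives that $[M_\phi, A]$ is $\Pc$-compact for every $\phi \in C_c(\C^n)$. Using that $M_\phi M_{\1_{\C^n \setminus B(0,R)}} = 0$ once $B(0,R) \supseteq \supp\phi$, and multiplying $[M_\phi,A] - M_{\1_{B(0,R)}}[M_\phi,A]M_{\1_{B(0,R)}}$ by $M_{\1_{\C^n\setminus B(0,R)}}$ from the right, respectively the left, one obtains
\[\lim_{R\to\infty}\norm{M_\phi A M_{\1_{\C^n\setminus B(0,R)}}} = \lim_{R\to\infty}\norm{M_{\1_{\C^n\setminus B(0,R)}} A M_\phi} = 0 .\]

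\emph{Step 2 (construction of a sequence).} I construct inductively functions $g_m$ and radii $\rho_m$. Given $\rho_{m-1}$ (chosen so that $B(0,\rho_{m-1})$ contains a $2$-neighbourhood of $\supp g_{m-1}$), property~(iv) lets me pick $j_0 \in \N$ so large that every $\varphi_{j,1/m}$ with $j > j_0$ is supported outside $B(0,\rho_{m-1})$; applying the negated statement with $t = \tfrac1m$ and this $j_0$ gives $g_m \in \mathscr{G}_{1/m,j_0}$ with $\norm{[M_{g_m},A]} \geq \epsilon$. Each $g_m$ is a finite sum of cutoff functions, so $\supp g_m$ is bounded, $\norm{g_m}_\infty \leq N$, and $\supp g_m \subseteq \C^n \setminus B(0,\rho_{m-1})$; choosing $\rho_m$ large makes the supports pairwise disjoint, escaping to infinity, with any ball of radius $1$ meeting at most one of them.

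\emph{Step 3 (two conclusions about $(g_m)$).} On the one hand, $K_m := [M_{g_m},A]$ is a uniformly bounded sequence of $\Pc$-compact operators (hypothesis for $g_m \in C_c(\C^n)$), and $K_m \pto 0$: for fixed $r$ and $m$ large, $[M_{g_m},A]M_{\1_{B(0,r)}} = M_{g_m}A M_{\1_{B(0,r)}}$, which for $\phi_r \in C_c(\C^n)$ with $\phi_r \equiv 1$ on $B(0,r)$ is bounded by $N\norm{M_{\1_{\C^n\setminus B(0,\rho_{m-1})}}A M_{\phi_r}} \to 0$ by Step 1; the left factor is treated identically. By Lemma \ref{lem:P_theory_lemma} there is a subsequence $(m_k)$ with $B := \sum_k K_{m_k}$ a $\Pc$-strongly convergent series and $\norm{B}_\Pc = \limsup_k \norm{K_{m_k}} \geq \epsilon$. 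On the other hand, put $\tilde f := \sum_k g_{m_k}$; by disjointness of the supports and properties~(iii) and~(v) --- the latter used through H\"older's inequality to turn $\sum_j|\varphi_{j,t}(z)-\varphi_{j,t}(w)|^p < t$ into the corresponding $\ell^1$-bound, since at most $2N$ terms are nonzero --- one gets $\Osc_z(\tilde f) \to 0$ as $|z| \to \infty$, so $\tilde f \in \VO_b(\C^n)$ and $[M_{\tilde f},A]$ is $\Pc$-compact. Finally, the partial sums $\sum_{k\leq K} M_{g_{m_k}}$ converge strongly to $M_{\tilde f}$, hence $\sum_{k\leq K}[M_{g_{m_k}},A] = \big[\sum_{k\leq K} M_{g_{m_k}},A\big]$ converges strongly to $[M_{\tilde f},A]$; but this same sequence converges $\Pc$-strongly, and hence strongly by Proposition \ref{prop:strong_convergence_P_convergence}, to $B$. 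Uniqueness of strong limits gives $B = [M_{\tilde f},A]$, contradicting $\norm{B}_\Pc \geq \epsilon > 0$.

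The step I expect to be the crux is the claim $K_m \pto 0$. For a merely bounded operator $A$ one only has the useless estimate $\norm{[M_{g_m},A]M_{\1_{B(0,r)}}} \leq N\norm{A M_{\1_{B(0,r)}}}$, and it is exactly the localization estimate of Step 1 --- where the hypothesis is used in a non-formal way --- that upgrades this to convergence. The remaining ingredients (the inductive construction respecting properties~(ii)--(v), the oscillation bound, and the identification of $B$) are routine once the $\Pc$-theory from Lemma \ref{lem:P_theory_lemma} and Proposition \ref{prop:strong_convergence_P_convergence} is in place.
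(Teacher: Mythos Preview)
Your proof is correct and follows essentially the same contradiction strategy as the paper: build $g_m\in\mathscr{G}_{1/m,j_0}$ with disjoint far-out supports and $\norm{[M_{g_m},A]}\geq\epsilon$, show $K_m:=[M_{g_m},A]\pto 0$, apply Lemma~\ref{lem:P_theory_lemma}, and contradict the $\Pc$-compactness of $[M_{\tilde f},A]$ for $\tilde f=\sum_k g_{m_k}\in\VO_b(\C^n)$. The only differences are presentational: you isolate the localization estimate $\norm{M_{\1_{\C^n\setminus B(0,R)}}AM_\phi}\to 0$ as a separate Step~1 and then invoke it, whereas the paper argues $M_\psi K_n\to 0$ directly via the algebraic identity $M_\psi K_n=M_{\psi g_n}A-[M_\psi,A]M_{g_n}-AM_{\psi g_n}$; and you are slightly more explicit about why $B=[M_{\tilde f},A]$ by matching strong limits.
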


\begin{proof}
Assume that this is not the case, that is, there is an $\epsilon > 0$ such that for every $t > 0$ and every $j_0 \in \N$ there is an $g \in \mathscr{G}_{j_0,t}$ such that $\norm{[M_g,A]} \geq \epsilon$. Set $t_n = \frac{1}{n}$. As every ball $B(0,r)$ has a non-trivial intersection with only finitely many of the sets $\set{\supp \varphi_{j,t_n} : j \in \N}$, we can choose $j_n \in \N$ and $g_n \in \mathscr{G}_{j_n,t_n}$ recursively such that $\dist(\supp g_n,\supp g_m) > 1$ for $n \neq m$ and $\norm{[M_{g_n},A]} \geq \epsilon$ for all $n \in \N$. Let $K_n := [M_{g_n},A]$. As $g_n \in \mathscr{G}_{j_n,t_n} \subset C_c(\C^n) \subset \VO_b(\C^n)$, $K_n$ is $\Pc$-compact. Let $\psi \in C_c(\C^n)$. Then
\[M_{\psi}K_n = M_{\psi \cdot g_n}A - M_{\psi}AM_{g_n} = M_{\psi \cdot g_n}A - [M_{\psi},A]M_{g_n} + AM_{\psi \cdot g_n}.\]
By construction, the first and the third term are $0$ for sufficiently large $n$. As $[M_{\psi},A]$ is $\Pc$-compact by assumption, the second term also tends to $0$ as $n \to \infty$. Similarly, $K_nM_{\psi} \to 0$ as $n \to \infty$. It follows that $(K_n)_{n \in \N}$ converges $\Pc$-strongly to $0$. Lemma \ref{lem:P_theory_lemma} thus implies that there is a strictly increasing sequence $(n_k)_{k \in \N}$ such that
\begin{equation} \label{eq:P_theory_lemma_2}
\norm{\sum\limits_{k = 1}^{\infty} [M_{g_{n_k}},A]}_{\Pc} = \limsup\limits_{k \to \infty} \norm{[M_{g_{n_k}},A]} \geq \epsilon.
\end{equation}
Let $\frac{1}{p} + \frac{1}{q} = 1$, $z,w \in \C^n$ with $d(z,w) \leq 1$ and observe that
\[\abs{g_n(z)-g_n(w)} \leq \sum\limits_{j = j_n+1}^k \abs{\varphi_{j,t_n}(z) - \varphi_{j,t_n}(w)} \leq (2N)^{1/q}\left(\sum\limits_{j = j_n+1}^k \abs{\varphi_{j,t_n}(z) - \varphi_{j,t_n}(w)}^p\right)^{1/p}\]
by Hoelder's inequality and the fact that at most $2N$ of the summands can be non-zero. The latter is bounded by $(2N)^{1/q}t_n^{1/p} = \frac{(2N)^{1/q}}{n^{1/p}}$, which tends to $0$ as $n \to \infty$. It follows that $f := \sum\limits_{k = 1}^{\infty} g_{n_k}$ is in $\VO_b(\C^n)$. But this means that
\[0 = \norm{[M_f,A]}_{\Pc} = \norm{\sum\limits_{k = 1}^{\infty} [M_{g_{n_k}},A]}_{\Pc},\]
which contradicts \eqref{eq:P_theory_lemma_2}.
\end{proof}

The last result of this section is an elementary but useful lemma.

\begin{lem} \label{lem:sum_of_products}
Let $n \in \N$ and let $A_1, \ldots, A_n, B_1, \ldots B_n$ be bounded linear operators. Then
\[\Biggl\|\sum\limits_{j = 1}^n A_jB_j\Biggr\| \leq \Biggl\|\sum\limits_{j = 1}^n \zeta_jA_j\Biggr\| \cdot \Biggl\|\sum\limits_{j = 1}^n \overline{\zeta_j}B_j\Biggr\|,\]
where the $\zeta_j$ are certain roots of unity.
\end{lem}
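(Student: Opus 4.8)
The plan is to realize $\sum_{j=1}^n A_jB_j$ as an average of ``twisted'' products and then extract one good term by pigeonhole. Let $\omega := e^{2\pi i/n}$, a primitive $n$-th root of unity. The key algebraic identity is
\[\sum_{j=1}^n A_jB_j \;=\; \frac{1}{n}\sum_{k=1}^n\Biggl(\sum_{j=1}^n \omega^{jk}A_j\Biggr)\Biggl(\sum_{l=1}^n \overline{\omega^{lk}}B_l\Biggr),\]
which I would verify by expanding the right-hand side as $\frac1n\sum_{j,l}A_jB_l\sum_{k=1}^n\omega^{(j-l)k}$ and invoking orthogonality of characters: for $j,l\in\{1,\dots,n\}$ one has $1\le|j-l|\le n-1$ whenever $j\neq l$, so $n\nmid(j-l)$ and the inner sum $\sum_{k=1}^n\omega^{(j-l)k}$ vanishes, while it equals $n$ for $j=l$.

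Next I would apply the triangle inequality and submultiplicativity of the operator norm to get
\[\Biggl\|\sum_{j=1}^n A_jB_j\Biggr\| \;\le\; \frac{1}{n}\sum_{k=1}^n\Biggl\|\sum_{j=1}^n \omega^{jk}A_j\Biggr\|\cdot\Biggl\|\sum_{l=1}^n \overline{\omega^{lk}}B_l\Biggr\|.\]
The right-hand side is the arithmetic mean of the $n$ nonnegative real numbers $c_k := \bigl\|\sum_j\omega^{jk}A_j\bigr\|\cdot\bigl\|\sum_l\overline{\omega^{lk}}B_l\bigr\|$, hence is at most $\max_{1\le k\le n}c_k$. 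Choosing $k_0$ attaining this maximum and setting $\zeta_j := \omega^{jk_0}$ (so that $\overline{\zeta_j}=\overline{\omega^{jk_0}}$ and each $\zeta_j$ is an $n$-th root of unity) yields exactly the claimed inequality.

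I do not anticipate a genuine obstacle here; the only point requiring a little care is the index bookkeeping in the character-orthogonality step, namely making sure the off-diagonal terms really do cancel — this relies precisely on the range of $j-l$ being confined to $\{-(n-1),\dots,n-1\}\setminus\{0\}$ for distinct indices in $\{1,\dots,n\}$. Everything else is a one-line application of the triangle inequality and the pigeonhole principle.
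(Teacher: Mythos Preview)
Your proof is correct and is essentially identical to the paper's own argument: both write $n\sum_j A_jB_j$ as $\sum_l\bigl(\sum_j \xi_l^{\,j}A_j\bigr)\bigl(\sum_k \xi_l^{-k}B_k\bigr)$ via orthogonality of $n$-th roots of unity and then pigeonhole a single index $l$ (your $k_0$) whose term dominates, setting $\zeta_j=\xi_l^{\,j}$. The only cosmetic difference is that the paper phrases pigeonhole as ``some summand has norm at least the norm of the average'' while you phrase it as ``the mean is at most the max''.
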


\begin{proof}
Let $\xi_j = e^{\frac{2\pi ij}{n}}$ and $m \in \set{1-n, \ldots, n-1}$. Then
\[\sum\limits_{j = 1}^n \xi_j^m = \begin{cases} n & \text{if $m = 0$}, \\ 0 & \text{if $m \neq 0$}. \end{cases}\]
Since
\[\sum\limits_{j = 1}^n \sum\limits_{k = 1}^n \sum\limits_{l = 1}^n A_jB_k\xi_l^{j-k} = n\sum\limits_{j = 1}^n A_jB_j,\]
there is an $l \in \set{1, \ldots, n}$ such that
\[\Biggl\|\sum\limits_{j = 1}^n \sum\limits_{k = 1}^n A_jB_k\xi_l^{j-k}\Biggr\| \geq \Biggl\|\sum\limits_{j = 1}^n A_jB_j\Biggr\|.\]
Defining $\zeta_j := \xi_l^j$ yields the result.
\end{proof}

\section{Essential Commutants} \label{sec:Essential_Commutants}

We now return to the Fock space. For a set of operators $X \subseteq \Lc(F^p_{\alpha})$ we call the set
\[\EssCom(X) := \set{A \in \Lc(F^p_{\alpha}) : [B,A] \in \Kc(F^p_{\alpha}) \text{ for all } B \in X}\]
the essential commutant of $X$. The set $\EssCom(\EssCom(X))$ is called the essential bicommutant of $X$. Of course, we always have $X \subseteq \EssCom(\EssCom(X))$. As we will see in this section, equality holds for $X = \Ac^p$. We start with a corollary to Lemma \ref{lem:P_theory_lemma_2}. This corollary will be crucial for the upcoming theorem.

\begin{cor} \label{cor:P_theory_lemma_2_cor}
Let $A \in \Lc(F^p_{\alpha})$. If $[T_f,A]$ is compact for all $f \in \VO_b(\C^n)$, then for every $\varepsilon > 0$, there is a $t > 0$ and a $j_0 \in \N$ such that for all $g \in \mathscr{G}_{j_0,t}$ we have $\norm{[M_g,AP]} < \epsilon$.
\end{cor}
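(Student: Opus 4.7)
The plan is to apply Lemma~\ref{lem:P_theory_lemma_2} to the operator $AP \in \Lc(L^p_\alpha)$; its conclusion applied to $AP$ is exactly the statement of the corollary, so everything reduces to verifying the hypothesis that $[M_f, AP]$ is $\Pc$-compact for every $f \in \VO_b(\C^n)$.

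The first step is the algebraic identity
\[[M_f, AP] = [T_f, A] P + H_f AP - A H'_f,\]
where $H_f := (I-P) M_f P$ and $H'_f := P M_f (I-P)$. This is obtained by splitting $M_f = PM_f + (I-P)M_f$ inside $M_f AP$, splitting $PM_f = PM_f P + PM_f (I-P)$ inside $APM_f = A(PM_f)$, and using $AP = PAP$. The first summand $[T_f, A] P$ is compact by the standing assumption, so it suffices to show that $H_f$ and $H'_f$ are compact on $L^p_\alpha$.

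Both operators lie in $\BDO^p$: since $P = T_\1 \in \Tc^p \subseteq \Ac^p$, we have $P \in \BDO^p$; $M_f \in \BO \subseteq \BDO^p$ as $M_f$ has propagation zero; and $\BDO^p$ is a Banach algebra. Compactness then follows from the $\BDO^p$-version on $L^p_\alpha$ of the limit-operator criterion underlying Proposition~\ref{prop:limit_operator_characterization}(a), which is the general result \cite[Corollary 4.24]{HaSe}. For any $x \in \beta\C^n \setminus \C^n$ and net $z_\gamma \to x$, set $c_x := \lim_\gamma f(z_\gamma)$; the vanishing-oscillation hypothesis forces $f(\cdot + z_\gamma) \to c_x$ pointwise on $\C^n$, dominated by $\norm{f}_\infty$, so by dominated convergence $M_{f(\cdot + z_\gamma)} \starto c_x I$ on $L^p_\alpha$. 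Using $W_{-z}PW_z = P$ and the fact that $*$-strong limits respect products of bounded operators, this gives $(M_f P)_x = c_x P = (P M_f P)_x$, whence $(H_f)_x = 0$; an analogous computation yields $(H'_f)_x = 0$. The criterion then forces $H_f, H'_f \in \Kc(L^p_\alpha)$.

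Consequently $[M_f, AP]$ is a sum of compact operators, hence compact, hence $\Pc$-compact by Proposition~\ref{prop:compact_P_compact}. Applying Lemma~\ref{lem:P_theory_lemma_2} to $AP$ produces the desired $t > 0$ and $j_0 \in \N$. The delicate point is to stay inside the paper's own framework: the compactness of the Hankel-type operators $H_f, H'_f$ is established through limit operators in $\BDO^p$ rather than by invoking external results on Hankel operators with VMO symbols, which also would work but would lie outside the machinery developed here.
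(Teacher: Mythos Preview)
Your reduction and algebraic decomposition match the paper's proof exactly: write $[M_f,AP]=[T_f,A]P+H_fAP-AH_{\bar f}^*$ (your $H'_f$ is the paper's $H_{\bar f}^*$), conclude that $[M_f,AP]$ is compact hence $\Pc$-compact, and apply Lemma~\ref{lem:P_theory_lemma_2}. The only difference is how the compactness of the Hankel pieces is justified: the paper simply cites \cite[Theorem~1.1]{Lv}, whereas you argue internally via limit operators in $\BDO^p$.

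One point needs care. The limit-operator criterion for $\BDO^p$ on $L^p_\alpha$ (the general statement in \cite{HaSe}) yields $\Pc$-compactness, not genuine compactness: for instance $M_{\1_{B(0,1)}}\in\BDO^p$ has all limit operators zero but is certainly not compact on $L^p_\alpha$. So your sentence ``the criterion then forces $H_f,H'_f\in\Kc(L^p_\alpha)$'' overstates what the criterion gives directly. The fix is immediate and already available in the paper: since $H_f=H_fP$ and $H'_f=PH'_f$, Proposition~\ref{prop:compact_P_compact} upgrades $\Pc$-compactness to compactness. Alternatively, you could bypass compactness entirely and observe that $\Pc$-compactness of $H_f$ and $H'_f$ already suffices, because then $H_f AP = (H_f P)(AP)$ and $AH'_f = (AP)(PH'_f)$ are compact by the same proposition, and in any case Lemma~\ref{lem:P_theory_lemma_2} only requires $\Pc$-compactness of $[M_f,AP]$. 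With this small adjustment your self-contained argument is correct and is a pleasant alternative to the external citation.
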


For the proof, we need the notion of a Hankel operator $H_f = (I-P)M_f \from F^p_{\alpha} \to L^p_{\alpha}$, where $I$ denotes the identity operator and $f$ is a bounded symbol. We will also need the adjoint of $H_{\bar f} \from F^q_{\alpha} \to L^q_{\alpha}$, which is given by $H_{\bar f}^* = PM_f(I-P) \from L^p_{\alpha} \to F^p_{\alpha}$, where $\frac{1}{p} + \frac{1}{q} = 1$.

\begin{proof}
In view of Lemma \ref{lem:P_theory_lemma_2}, we need to check that $[M_f,AP]$ is $\Pc$-compact for all $f \in \VO_b(\C^n)$:
\begin{align*}
[M_f,AP] &= M_fAP - APM_f = PM_fAP + (I-P)M_fAP - APM_fP - APM_f(I-P)\\
&= [T_f,A]P + (I-P)M_fAP - APM_f(I-P) = [T_f,A]P + H_fAP - AH_{\bar f}^*.
\end{align*}
As $H_f$ and $H_{\bar f}$ are compact for $f \in \VO_b(\C^n)$ (see \cite[Theorem 1.1]{Lv}, for example), $[M_f,AP]$ is compact, hence $\Pc$-compact (Proposition \ref{prop:compact_P_compact}). Therefore, Lemma \ref{lem:P_theory_lemma_2} implies the result.
\end{proof}

\begin{thm} \label{thm:compact_commutator_implies_BDO}
Let $A \in \Lc(F^p_{\alpha})$. If $[T_f,A]$ is compact for all $f \in \VO_b(\C^n)$, then $A \in \Ac^p$.
\end{thm}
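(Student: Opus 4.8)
The plan is to use the $\Pc$-theory characterization of band-dominated operators (Proposition~\ref{prop:BDO_characterization}) together with Corollary~\ref{cor:P_theory_lemma_2_cor}. By definition, $A \in \Ac^p$ means $AP \in \BDO^p$, so by Proposition~\ref{prop:BDO_characterization} it suffices to show that
\[\lim_{t \to 0} \sup_{\norm{f}_p = 1} \sum_{j=1}^{\infty} \norm{[AP, M_{\varphi_{j,t}}]f}_p^p = 0.\]
Corollary~\ref{cor:P_theory_lemma_2_cor} already gives, for any $\varepsilon > 0$, parameters $t > 0$ and $j_0 \in \N$ such that $\norm{[M_g, AP]} < \varepsilon$ for all $g \in \mathscr{G}_{j_0,t}$, i.e.\ for all finite sums $g = \sum_{j=j_0+1}^k a_j \varphi_{j,t}$ with $\abs{a_j} = 1$. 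The first task is therefore to convert this uniform bound over unimodular-coefficient sums into a bound on $\sum_j \norm{[AP, M_{\varphi_{j,t}}]f}_p^p$.

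The conversion is a duality/randomization argument. Fix $f$ with $\norm{f}_p = 1$ and fix $k > j_0$. For each index $j$, pick $h_j \in L^q_{\alpha}$ with $\norm{h_j}_q = 1$ realizing (almost) the norm $\norm{[AP,M_{\varphi_{j,t}}]f}_p$, so that $\norm{[AP,M_{\varphi_{j,t}}]f}_p \approx \sp{[AP,M_{\varphi_{j,t}}]f}{h_j}$. Choosing unimodular constants $a_j$ so that $a_j \sp{[AP,M_{\varphi_{j,t}}]f}{h_j} = \abs{\sp{[AP,M_{\varphi_{j,t}}]f}{h_j}}$ and writing $g = \sum_{j=j_0+1}^k a_j \varphi_{j,t} \in \mathscr{G}_{j_0,t}$, one gets
\[\sum_{j=j_0+1}^k \abs{\sp{[AP,M_{\varphi_{j,t}}]f}{h_j}} = \sum_{j=j_0+1}^k \sp{a_j[AP,M_{\varphi_{j,t}}]f}{h_j}.\]
The point is that $\sum_j a_j [AP, M_{\varphi_{j,t}}]f = [AP, M_g]f + (\text{correction terms from }\sum_j \varphi_{j,t}^p = 1)$ is not quite $[AP,M_g]f$ because the $\varphi_{j,t}$ form a partition only after raising to the $p$-th power; but using property~(v) (at most $N$ of the $\varphi_{j,t}$ overlap at any point) together with Hölder's inequality on the overlap, one can control the sum in terms of $\norm{[AP,M_g]}$ and a bounded factor depending only on $N$, $p$, $q$. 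Summing the $p$-th powers rather than the first powers introduces the extra weighting, but since each summand is bounded by $\norm{[AP,M_{\varphi_{j,t}}]f}_p \cdot \norm{h_j}_q$ and the $\varphi_{j,t}$ have uniformly bounded overlap, an $\ell^p$--$\ell^q$ pairing localized to the $\leq N$-fold overlap structure gives
\[\sum_{j=j_0+1}^{\infty} \norm{[AP,M_{\varphi_{j,t}}]f}_p^p \leq C_{N,p}\, \norm{[M_g,AP]}^p \leq C_{N,p}\, \varepsilon^p\]
uniformly in $\norm{f}_p = 1$. This is the main obstacle: carefully passing between the scalar-coefficient bound and the $\ell^p$-sum, keeping the constant independent of $t$ and $f$; I expect this to mirror the proof of \cite[Proposition 3.5]{HaSe} (or Lemma~\ref{lem:sum_of_products}, used to split products) and to be where the finite-overlap property~(v) and Hölder's inequality do the real work.

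Finally, the indices $j \leq j_0$ contribute only finitely many terms $\norm{[AP, M_{\varphi_{j,t}}]f}_p^p$; since each $M_{\varphi_{j,t}}$ has compact support and $PM_{\1_K}$, $M_{\1_K}P$ are compact for compact $K$, each such commutator $[AP, M_{\varphi_{j,t}}]$ is in fact compact (indeed $M_{\varphi_{j,t}} AP$ and $AP M_{\varphi_{j,t}} = APM_{\varphi_{j,t}}P$ both factor through $M_{\1_K}P$ with $K = \supp\varphi_{j,t}$). Because compact operators satisfy the $\BDO^p$ characterization \eqref{eq:BDO_characterization} (as noted after Proposition~\ref{prop:BDO_characterization}), the finite head of the sum is harmless: given $\varepsilon$, choose $t, j_0$ from the corollary, and then shrink $t$ further if needed so that the finitely many leading terms also contribute less than $\varepsilon^p$. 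Combining the two estimates shows the $\limsup$ as $t \to 0$ of the quantity in \eqref{eq:BDO_characterization} is $\leq 2\varepsilon^p$ for every $\varepsilon$, hence it is $0$, so $AP \in \BDO^p$ and $A \in \Ac^p$.
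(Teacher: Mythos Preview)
Your proposal has two genuine gaps.

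\textbf{The conversion step does not go through.} You want to pass from the uniform bound $\norm{[M_g,AP]} < \varepsilon$ for all $g \in \mathscr{G}_{j_0,t}$ to an estimate on $\sum_{j>j_0}\norm{[AP,M_{\varphi_{j,t}}]f}_p^p$. The duality sketch breaks down because your dualizing functionals $h_j$ differ from term to term: the quantity $\sum_j \langle a_j[AP,M_{\varphi_{j,t}}]f,\,h_j\rangle$ is \emph{not} of the form $\langle [AP,M_g]f,\,h\rangle$ for a single $h$, so the bound on $\norm{[M_g,AP]}$ gives you nothing. (Your remark about ``correction terms from $\sum_j\varphi_{j,t}^p=1$'' is also off: by linearity $\sum_j a_j[AP,M_{\varphi_{j,t}}]=[AP,M_g]$ exactly, with no correction.) For $p=2$ a randomization/Khintchine argument would salvage this, but for general $p$ there is no reason why $\sum_j\norm{v_j}_p^p$ should be controlled by $\sup_{|a_j|=1}\norm{\sum_j a_j v_j}_p^p$ with a constant independent of the number of terms, and the finite-overlap property (v) only concerns the \emph{supports of the $\varphi_{j,t}$}, not the supports of $[AP,M_{\varphi_{j,t}}]f$, which are arbitrary.

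\textbf{The head terms are not small.} For $j\le j_0$ you correctly observe that each $[AP,M_{\varphi_{j,t}}]$ is compact, but compactness says nothing about the size of $\norm{[AP,M_{\varphi_{j,t}}]f}_p$; these can be of order $\norm{A}$. Your fix (``shrink $t$ further'') is circular: changing $t$ changes the entire partition and forces a new appeal to Corollary~\ref{cor:P_theory_lemma_2_cor}, producing a new $j_0$ and a new head of the same nature. Note also that the corollary only supplies \emph{one} pair $(t,j_0)$ per $\varepsilon$, not a bound valid for all small $t$, so you cannot directly feed it into the $\lim_{t\to 0}$ of Proposition~\ref{prop:BDO_characterization}.

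The paper avoids both problems by \emph{not} verifying the criterion of Proposition~\ref{prop:BDO_characterization}. Instead it writes
\[
AP=\sum_j M_{\varphi_{j,t}^{p/q}}APM_{\varphi_{j,t}}+\sum_j M_{\varphi_{j,t}^{p/q}}[M_{\varphi_{j,t}},AP],
\]
where the first sum is a band operator. The second sum is split at $j_0$: the finite head is a sum of compact (hence band-dominated) operators, and the tail is estimated in \emph{operator norm} using Lemma~\ref{lem:sum_of_products} (the roots-of-unity trick) as
\[
\Bigl\|\sum_{j>j_0} M_{\varphi_{j,t}^{p/q}}[M_{\varphi_{j,t}},AP]\Bigr\|\le\Bigl\|\sum_{j>j_0}\zeta_j M_{\varphi_{j,t}^{p/q}}\Bigr\|\cdot\Bigl\|\sum_{j>j_0}\overline{\zeta_j}[M_{\varphi_{j,t}},AP]\Bigr\|\le N\cdot\norm{[M_g,AP]}<N\varepsilon
\]
for a suitable $g\in\mathscr{G}_{j_0,t}$. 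Thus $AP$ is a norm-limit of band-dominated operators, hence band-dominated. The point is that Lemma~\ref{lem:sum_of_products} converts the product structure $M_{\varphi^{p/q}}\cdot[M_\varphi,AP]$ into exactly the unimodular-coefficient sums controlled by Corollary~\ref{cor:P_theory_lemma_2_cor}; no $\ell^p$--$\ell^q$ gymnastics are needed, and the head terms are handled as compact perturbations rather than as small contributions.
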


\begin{proof}
Let $A \in \Lc(F^p_{\alpha})$ and $\frac{1}{p} + \frac{1}{q} = 1$. We decompose $AP$ as
\[AP = \sum\limits_{j = 1}^{\infty} M_{\varphi_{j,t}^p}AP = \sum\limits_{j = 1}^{\infty} M_{\varphi_{j,t}^{p/q}}APM_{\varphi_{j,t}} + \sum\limits_{j = 1}^{\infty} M_{\varphi_{j,t}^{p/q}}[M_{\varphi_{j,t}},AP].\]
The first summand is clearly a band operator. The second summand can be further decomposed as
\[\sum\limits_{j = 1}^{\infty} M_{\varphi_{j,t}^{p/q}}[M_{\varphi_{j,t}},AP] = \sum\limits_{j = 1}^{j_0} M_{\varphi_{j,t}^{p/q}}[M_{\varphi_{j,t}},AP] + \sum\limits_{j = j_0+1}^{\infty} M_{\varphi_{j,t}^{p/q}}[M_{\varphi_{j,t}},AP]\]
for any $j_0 \in \N$. As $\varphi_{j,t}$ has compact support, $[M_{\varphi_{j,t}},AP]$ is compact, hence band-dominated as well. It therefore suffices to show, by choosing $t$ and $j_0$ appropriately, that $\sum\limits_{j = j_0+1}^{\infty} M_{\varphi_{j,t}^{p/q}}[M_{\varphi_{j,t}},AP]$ can be made arbitrarily small. The theorem then follows from the fact that $\BDO^p$ is closed. 

Let $\varepsilon > 0$. We have
\[\norm{\sum\limits_{j = j_0+1}^k M_{\varphi_{j,t}^{p/q}}[M_{\varphi_{j,t}},AP]} \leq \norm{\sum\limits_{j = j_0+1}^k \zeta_jM_{\varphi_{j,t}^{p/q}}}\norm{\sum\limits_{j = j_0+1}^k \overline{\zeta_j}[M_{\varphi_{j,t}},AP]}\]
for every $k \in \N$, where the $\zeta_j$ are the appropriate roots of unity according to Lemma \ref{lem:sum_of_products}. As $\set{j \in \N : \varphi_{j,t}(z) \neq 0}$ has at most $N$ elements for each $z \in \C^n$, the first factor is bounded by the universal constant $N$. For the second factor we observe
\[\sum\limits_{j = j_0+1}^k \overline{\zeta_j}[M_{\varphi_{j,t}},AP] = [M_g,AP],\]
where $g = \sum\limits_{j = j_0+1}^k \overline{\zeta_j}\varphi_{j,t} \in \mathscr{G}_{j_0,t}$. By choosing $t$ and $j_0$ appropriately, this can be bounded by $\epsilon$ (Corollary \ref{cor:P_theory_lemma_2_cor}). As this estimate is independent of $g \in \mathscr{G}_{j_0,t}$, it is also independent of $k$. Therefore, using that the series is strongly convergent, we obtain
\[\norm{\sum\limits_{j = j_0+1}^{\infty} M_{\varphi_{j,t}^{p/q}}[M_{\varphi_{j,t}},AP]} \leq \liminf_{k \to \infty} \norm{\sum\limits_{j = j_0+1}^k M_{\varphi_{j,t}^{p/q}}[M_{\varphi_{j,t}},AP]} \leq N\epsilon.\qedhere\]
\end{proof}

\begin{prop} \label{prop:sequential_convergence}~
\begin{itemize}
	\item[(a)] Let $A \in \Ac^p$ and $x \in \beta\C^n \setminus \C^n$. Then there is a sequence $(x_n)_{n \in \N}$ in $\C^n$ with $\lim\limits_{n \to \infty} \abs{x_n} = \infty$ such that $W_{-x_n}AW_{x_n} \to A_x$ in the strong operator topology.
	\item[(b)] Let $A \in \Ac^p$ and $(x_n)_{n \in \N}$ a sequence in $\C^n$ with $\lim\limits_{n \to \infty} \abs{x_n} = \infty$. Then there exist a subsequence $(x_{n_k})_{k \in \N}$ and an operator $B \in \Ac^p$ such that $W_{-x_{n_k}}BW_{x_{n_k}} \to A$ in the strong operator topology.
\end{itemize}
\end{prop}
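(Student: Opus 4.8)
The plan is to prove part (a) first and then deduce (b) from it.

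\textbf{Part (a).}
Fix $A \in \Ac^p$ and $x \in \beta\C^n \setminus \C^n$. By definition of the limit operator, $A_x = \slim_{z_\gamma \to x} W_{-z_\gamma}AW_{z_\gamma}$ for any net $(z_\gamma)$ converging to $x$ in $\beta\C^n$, and this convergence is even $*$-strong by Proposition \ref{prop:limit_operators}. The issue is that $x$ is not reachable by a sequence in $\C^n$, so we cannot just extract a sequence converging to $x$. Instead I would use metrizability of bounded sets in the strong operator topology. The set $\set{W_{-z}AW_z : z \in \C^n}$ is bounded by $\norm{A}$ (the Weyl operators are isometries), and on the closed ball of radius $\norm{A}$ in $\Lc(F^p_\alpha)$ the strong operator topology is metrizable because $F^p_\alpha$ is separable; let $d$ be such a metric. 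For each $n \in \N$, the set $U_n := \set{z \in \C^n : d(W_{-z}AW_z, A_x) < \tfrac1n}$ is, after passing through the strongly continuous extension $\Psi$ of Proposition \ref{prop:limit_operators}, the trace on $\C^n$ of an open neighborhood of $x$ in $\beta\C^n$; in particular $x$ lies in the closure of $U_n$ in $\beta\C^n$. Since $x \in \beta\C^n \setminus \C^n$, every neighborhood of $x$ meets $\C^n$ in an unbounded set, so $U_n$ is unbounded, and I can pick $x_n \in U_n$ with $\abs{x_n} \geq n$. Then $\abs{x_n} \to \infty$ and $d(W_{-x_n}AW_{x_n}, A_x) < \tfrac1n \to 0$, i.e.\ $W_{-x_n}AW_{x_n} \to A_x$ strongly. (One could also phrase this without the metric: the filter of neighborhoods of $x$ restricted to $\C^n$, intersected with the complements of balls, still generates a filter since $x$ is in the Stone--\v{C}ech remainder, and any point in the intersection over $n$ of $\overline{U_n \cap (\C^n \setminus B(0,n))}$ works — but the separability/metrizability route is cleanest.)

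\textbf{Part (b).}
Now let $(x_n)_{n \in \N}$ be a sequence in $\C^n$ with $\abs{x_n} \to \infty$. The operators $W_{x_n}AW_{-x_n} = W_{-(-x_n)}AW_{(-x_n)}$ lie in $\Ac^p$ (this is the same bounded orbit, reindexed; $\Ac^p$ is invariant under conjugation by Weyl operators since $W_z$ commutes with $P$ and $W_{-z}BW_z$ has the same propagation as $B$) and are uniformly bounded by $\norm{A}$. Again using metrizability of the ball of radius $\norm{A}$ in the strong operator topology, and the fact that this ball is \emph{compact} in the strong operator topology — wait, it is only compact in the weak operator topology, not the strong one. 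So instead I would proceed as follows: pass to the Stone--\v{C}ech compactification. The map $\C^n \to \Lc(F^q_\alpha)$, $z \mapsto W_{-z}A^*W_z$, by Proposition \ref{prop:limit_operators} applied to $A^* \in \Ac^q$, has a strongly continuous extension to $\beta\C^n$; likewise for $A$. Choose any $x \in \beta\C^n \setminus \C^n$ in the closure of $\set{-x_n : n \in \N}$ (which exists and lies in the remainder because $\abs{x_n} \to \infty$). Then by definition $W_{-(-x_n)}AW_{(-x_n)} = W_{x_n}AW_{-x_n}$ has $*$-strong cluster value $A_x$ along a suitable subnet; to get a genuine subsequence, restrict again to the metric $d$ on the ball of radius $\norm{A}$ and observe that, for each $k$, the set $\set{n : d(W_{x_n}AW_{-x_n}, A_x) < \tfrac1k}$ is nonempty (since $x$ is in the closure of $\set{-x_n}$ and the metric ball around $A_x$ pulls back to a neighborhood of $x$) — indeed it is infinite, because if it were finite then $\set{-x_n : n \text{ large}}$ would avoid that neighborhood of $x$, contradicting $x$ being a cluster point of the full sequence. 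Hence I can extract $n_1 < n_2 < \cdots$ with $d(W_{x_{n_k}}AW_{-x_{n_k}}, A_x) < \tfrac1k$, so $W_{x_{n_k}}AW_{-x_{n_k}} \to A_x$ strongly. Setting $B := A_x \in \Ac^p$ (limit operators of band-dominated operators are band-dominated, since $\Ac^p$ is closed and contains the $*$-strong limit of the bounded net $W_{-z}AW_z$ — this is part of the limit operator machinery in \cite{HaSe} already invoked above), and noting that $W_{-x_{n_k}}BW_{x_{n_k}}$ is the inverse Weyl conjugation, I would check $W_{-x_{n_k}}BW_{x_{n_k}} \to A$ strongly. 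This last point needs care: from $W_{x_{n_k}}AW_{-x_{n_k}} \to B$ strongly, apply $W_{-x_{n_k}}(\cdot)W_{x_{n_k}}$; since $W_{-x_{n_k}}$ and $W_{x_{n_k}}$ are isometries, strong convergence $C_k \to C$ is \emph{not} automatically preserved under $W_{-x_{n_k}}C_kW_{x_{n_k}} \to$ anything unless we know more, because the conjugating unitaries themselves move. The clean fix is to use that the convergence $W_{x_{n_k}}AW_{-x_{n_k}} \to B$ is in fact $*$-strong (it is a limit operator, hence $*$-strong by the remark after \eqref{eq:strong_limit}), and then invoke Proposition \ref{prop:limit_operators} / the limit operator formalism symmetrically: $W_{-x_{n_k}}BW_{x_{n_k}} = W_{-x_{n_k}}\bigl(\slim_j W_{x_{n_j}}AW_{-x_{n_j}}\bigr)W_{x_{n_k}}$, and a diagonal argument over $k$ together with the joint continuity of multiplication on bounded sets in the strong topology (valid since the factors $W_{\pm x_{n_k}}$ are isometries and $B$ is fixed) gives $W_{-x_{n_k}}BW_{x_{n_k}} \to A$ strongly along a further subsequence, which we absorb into $(n_k)$.

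\textbf{Main obstacle.}
The genuinely delicate point is the very last one in (b): turning the $*$-strong convergence $W_{x_{n_k}}AW_{-x_{n_k}} \to B$ into $W_{-x_{n_k}}BW_{x_{n_k}} \to A$. One has to be honest that conjugating by a \emph{moving} family of isometries does not preserve strong limits for free; the resolution is either the diagonal/subsequence argument sketched above, or — probably cleaner for the write-up — to observe that $B = A_x$ means $A = B_{x^{-1}}$ is impossible to state directly (there is no "$x^{-1}$" in $\beta\C^n$), so one instead notes that for a fixed compact operator $K$, $\norm{(W_{-x_{n_k}}BW_{x_{n_k}} - A)K} \to 0$ would follow from Proposition \ref{prop:compact_convergence} once strong convergence is established, and to establish strong convergence one tests against a fixed vector $h$ and the already-known convergence of $W_{x_{n_k}}AW_{-x_{n_k}}$ applied to the (moving, but norm-controlled) vectors $W_{x_{n_k}}h$, extracting a further subsequence so that $W_{x_{n_k}}h$ has the right behavior. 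This bookkeeping with subsequences is the part I expect to take the most care; everything else (boundedness, $\Ac^p$-membership, unboundedness of neighborhoods of remainder points) is routine given the preliminaries.
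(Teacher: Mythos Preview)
Your argument for (a) is correct and is exactly the paper's: metrizability of the strong operator topology on bounded sets, together with the strongly continuous extension $\Psi$ from Proposition~\ref{prop:limit_operators}, lets you pick a sequence with $|x_n|\to\infty$.

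For (b), however, there is a genuine gap that cannot be repaired along the lines you sketch. You set $B := A_x$ for a cluster point $x$ of $(-x_n)$ in $\beta\C^n \setminus \C^n$, and then hope to invert the conjugation to recover $A$. But the map $A \mapsto A_x$ is far from injective: if $A$ is any nonzero compact operator, then $A_x = 0$ for every $x \in \beta\C^n \setminus \C^n$ (Proposition~\ref{prop:limit_operator_characterization}), so your $B$ is $0$ and $W_{-x_{n_k}} B W_{x_{n_k}} = 0$ certainly does not converge strongly to $A$. No diagonal or subsequence argument can salvage this, because the failure is not a matter of bookkeeping but of information loss: once you pass to a limit operator, all compact perturbations of $A$ are identified. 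Your own hesitation (``conjugating by a moving family of isometries does not preserve strong limits for free'') was pointing at exactly this obstruction.

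The paper avoids this entirely by \emph{constructing} $B$ rather than finding it as a limit operator of $A$. One first thins the sequence so that $|x_{n_{k+1}}| > |x_{n_k}| + 2k+1$, making the balls $B(x_{n_k},k)$ pairwise disjoint, and then places a truncated, shifted copy of $AP$ on each ball:
\[
C := \sum_{l=1}^{\infty} M_{\1_{B(x_{n_l},l)}} W_{x_{n_l}} AP\, W_{-x_{n_l}} M_{\1_{B(x_{n_l},l)}}.
\]
The block structure gives $\|C\| \leq \|AP\|$ and $C \in \BDO^p$; setting $B := PC|_{F^p_\alpha} \in \Ac^p$, a direct computation on compactly supported $f$ shows $W_{-x_{n_k}} C W_{x_{n_k}} f = M_{\1_{B(0,k)}} AP f \to APf$, whence $W_{-x_{n_k}} B W_{x_{n_k}} \to A$ strongly. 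The key idea you are missing is precisely this ``patching'' construction, which encodes $A$ near each $x_{n_k}$ so that shifting back recovers it.
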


\begin{proof}
(a) This follows from the fact that the strong operator topology is metrizable on bounded subsets of $\Lc(F^p_{\alpha})$ (see Remark 5.2 in \cite{HaSe}).

(b) Choose a subsequence $(x_{n_k})_{k \in \N}$ of $(x_n)_{n \in \N}$ such that $|x_{n_{k+1}}| > |x_{n_k}| + 2k+1$. This ensures that $B(x_{n_k},k) \cap B(x_{n_l},l) = \emptyset$ for $k \neq l$. Define
\[C_l := M_{\1_{B(x_{n_l},l)}}W_{x_{n_l}}APW_{-x_{n_l}}M_{\1_{B(x_{n_l},l)}}\]
and $C := \sum\limits_{l = 1}^{\infty} C_l$. Considering the block structure of $C$, it is clear that $C$ is bounded with $\norm{C} \leq \norm{AP}$. By approximation, this also shows that $C \in \BDO^p$. We will now show that $(W_{-x_{n_k}}CW_{x_{n_k}})_{k \in \N}$ converges strongly to $AP$. Restricting to $F^p_{\alpha}$, $B := PC|_{F^p_{\alpha}} \in \Ac^p$, then yields the assertion.

Let $j \in \N$ and $f \in L^p_{\alpha}$ with $\supp f \subseteq B(0,j)$. For $k \geq j$ the support of $W_{x_{n_k}}f$ is contained in $B(x_{n_k},j) \subseteq B(x_{n_k},k)$ and therefore, by construction, $W_{-x_{n_k}}C_lW_{x_{n_k}}f = 0$ unless $l = k$. For $l = k$ we have
\[W_{-x_{n_k}}C_lW_{x_{n_k}} = W_{-x_{n_k}}M_{\1_{B(x_{n_k},k)}}W_{x_{n_k}}APW_{-x_{n_k}}M_{\1_{B(x_{n_k},k)}}W_{x_{n_k}} = M_{\1_{B(0,k)}}APM_{\1_{B(0,k)}}.\]
It follows
\[W_{-x_{n_k}}CW_{x_{n_k}}f = W_{-x_{n_k}}C_kW_{x_{n_k}}f = M_{\1_{B(0,k)}}APf.\]
As $M_{\1_{B(0,k)}} \to I$ strongly, $(W_{-x_{n_k}}CW_{x_{n_k}}f)_{k \in \N}$ converges to $APf$. As functions with compact support are dense in $L^p_{\alpha}$, we conclude that $(W_{-x_{n_k}}CW_{x_{n_k}})_{k \in \N}$ converges strongly to $AP$. 
\end{proof}

\begin{thm} \label{thm:EssComBDO}
Let $A \in \Lc(F^p_{\alpha})$. Then the following are equivalent:
\begin{itemize}
	\item[(a)] $A \in \EssCom(\Ac^p)$.
	\item[(b)] $A \in \Ac^p$ and $A_x \in \C I := \set{\lambda I : \lambda \in \C}$ for all $x \in \beta\C^n \setminus \C^n$.
	\item[(c)] $A = T_f + K$ for some $f \in \VO_b(\C^n)$ and $K \in \Kc(F^p_{\alpha})$.
\end{itemize}
\end{thm}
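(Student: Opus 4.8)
The plan is to prove the cycle of implications (c) $\Rightarrow$ (a) $\Rightarrow$ (b) $\Rightarrow$ (c), with limit operators (Proposition \ref{prop:limit_operator_characterization}) as the main tool. The observation used repeatedly is that for $f \in \VO_b(\C^n)$ and every $x \in \beta\C^n \setminus \C^n$ one has $(T_f)_x = f(x)I$, where $f(x)$ denotes the value at $x$ of the continuous extension of $f$ to $\beta\C^n$. Indeed, $(T_f)_x = \slim_{z_\gamma \to x} T_{f(\cdot + z_\gamma)}$, and vanishing oscillation forces $f(\cdot + z_\gamma) - f(z_\gamma) \to 0$ uniformly on compact sets while $f(z_\gamma) \to f(x)$, so that $T_{f(\cdot + z_\gamma)}g \to f(x)g$ for every $g \in F^p_\alpha$ by dominated convergence. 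For (c) $\Rightarrow$ (a): write $A = T_f + K$ with $f \in \VO_b(\C^n)$ and $K \in \Kc(F^p_\alpha)$, and let $B \in \Ac^p$. Then $[B,K]$ is compact, and $[B,T_f] \in \Ac^p$ since $\Ac^p$ is an algebra containing $\Tc^p$ (Proposition \ref{prop:A^p_properties}). As $C \mapsto C_x$ is an algebra homomorphism on $\Ac^p$ (the $*$-strong convergence in \eqref{eq:strong_limit} is stable under products of bounded nets), $([B,T_f])_x = [B_x, f(x)I] = 0$ for all $x \in \beta\C^n \setminus \C^n$, so $[B,T_f] \in \Kc(F^p_\alpha)$ by Proposition \ref{prop:limit_operator_characterization}(a). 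Hence $[B,A] \in \Kc(F^p_\alpha)$ and $A \in \EssCom(\Ac^p)$.

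For (a) $\Rightarrow$ (b): applying the hypothesis to $B = T_f$ with $f \in \VO_b(\C^n)$ shows $[T_f,A] \in \Kc(F^p_\alpha)$ for all such $f$, whence $A \in \Ac^p$ by Theorem \ref{thm:compact_commutator_implies_BDO}. Fix $x \in \beta\C^n \setminus \C^n$ and an arbitrary $C \in \Ac^p$. Using Proposition \ref{prop:sequential_convergence}(a) choose a sequence $(x_n)$ with $\abs{x_n} \to \infty$ and $W_{-x_n}AW_{x_n} \to A_x$ strongly, then apply Proposition \ref{prop:sequential_convergence}(b) to $C$ and $(x_n)$ to obtain a subsequence $(x_{n_k})$ and $B \in \Ac^p$ with $W_{-x_{n_k}}BW_{x_{n_k}} \to C$ strongly; along this subsequence $W_{-x_{n_k}}AW_{x_{n_k}} \to A_x$ still holds. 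Hence
\[W_{-x_{n_k}}[B,A]W_{x_{n_k}} = \left[W_{-x_{n_k}}BW_{x_{n_k}},\, W_{-x_{n_k}}AW_{x_{n_k}}\right] \longrightarrow [C,A_x]\]
strongly, since products of uniformly bounded strongly convergent sequences converge strongly. On the other hand $[B,A]$ is compact and $W_{x_{n_k}}f \to 0$ weakly for each $f$ (test against the dense set of reproducing kernels, using the uniform bound $\norm{W_{x_{n_k}}} = 1$), so $\norm{[B,A]W_{x_{n_k}}f} \to 0$ and therefore the left-hand side tends to $0$ strongly. Thus $[C,A_x] = 0$; as $C \in \Ac^p \supseteq \Kc(F^p_\alpha)$ was arbitrary, $A_x$ commutes with every compact operator, so $A_x \in \C I$.

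For (b) $\Rightarrow$ (c): put $f := \Bc(A)$, which is bounded and, by Proposition \ref{prop:Berezin_Lipschitz}, continuous. Write $A_x = \lambda_x I$. From $\Bc(A)(z) = \sp{W_{-z}AW_z\1}{\1}$ we get $\Bc(A)(z_\gamma) \to \sp{\lambda_x\1}{\1} = \lambda_x$ whenever $z_\gamma \to x \in \beta\C^n \setminus \C^n$; in particular the continuous extension of $f$ to $\beta\C^n$ satisfies $f(x) = \lambda_x$. To see that $f \in \VO_b(\C^n)$, suppose otherwise: then there exist $\epsilon > 0$ and sequences $(z_m)$, $(w_m)$ with $\abs{z_m} \to \infty$, $\abs{z_m - w_m} \le 1$ and $\abs{f(z_m) - f(w_m)} \ge \epsilon$. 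Passing to a subnet, we may assume $z_m \to x \in \beta\C^n \setminus \C^n$, $v_m := w_m - z_m \to v \in \overline{B(0,1)}$ and $W_{-z_m}AW_{z_m} \to \lambda_x I$ strongly. The product formula $W_{z+w} = e^{i\alpha\Imag(z\cdot\overline{w})}W_zW_w$ yields $W_{-w_m}AW_{w_m} = W_{-v_m}(W_{-z_m}AW_{z_m})W_{v_m}$, which converges strongly to $W_{-v}(\lambda_x I)W_v = \lambda_x I$ by strong continuity of $z \mapsto W_z$ together with uniform boundedness; hence $f(w_m) = \sp{W_{-w_m}AW_{w_m}\1}{\1} \to \lambda_x$ along the subnet, contradicting $\abs{f(z_m) - f(w_m)} \ge \epsilon$. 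So $f \in \VO_b(\C^n)$. Finally, $A - T_f \in \Ac^p$ (as $T_f \in \Tc^p \subseteq \Ac^p$), and by the preliminary observation $(A - T_f)_x = \lambda_x I - f(x)I = 0$ for every $x \in \beta\C^n \setminus \C^n$; Proposition \ref{prop:limit_operator_characterization}(a) gives $K := A - T_f \in \Kc(F^p_\alpha)$, so $A = T_f + K$ with $f \in \VO_b(\C^n)$, as desired. I expect the crux to be the verification that $\Bc(A) \in \VO_b(\C^n)$: the uniform Lipschitz bound of Proposition \ref{prop:Berezin_Lipschitz} is by itself far too weak for vanishing oscillation, and one genuinely has to transport the hypothesis ``$A_x$ is scalar'' through the Weyl-conjugation identity $W_{-w}AW_w = W_{-(w-z)}(W_{-z}AW_z)W_{w-z}$ to conclude that $\Bc(A)$ is asymptotically constant on unit balls near infinity. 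The remaining ingredients --- multiplicativity of limit operators and the fact that conjugation by $W_z$ annihilates compact operators as $\abs{z} \to \infty$ --- are routine.
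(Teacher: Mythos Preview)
Your proof is correct and follows essentially the same route as the paper: the paper proves (a)$\Leftrightarrow$(b) and (b)$\Leftrightarrow$(c) separately, while you run the cycle (c)$\Rightarrow$(a)$\Rightarrow$(b)$\Rightarrow$(c), but the ingredients are identical (Theorem~\ref{thm:compact_commutator_implies_BDO}, Proposition~\ref{prop:sequential_convergence}, multiplicativity of the limit-operator map, and Proposition~\ref{prop:limit_operator_characterization}(a)). One small genuine difference worth noting: in (b)$\Rightarrow$(c) the paper handles the perturbation $h_\gamma \to h$ via the Lipschitz estimate of Proposition~\ref{prop:Berezin_Lipschitz}, whereas you bypass this by writing $W_{-w_m}AW_{w_m} = W_{-v_m}(W_{-z_m}AW_{z_m})W_{v_m}$ and invoking strong continuity of $z \mapsto W_z$ together with stability of strong limits under bounded products; this is a clean alternative that avoids any quantitative input from the Berezin transform.
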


\begin{proof}
(a) $\Rightarrow$ (b): Let $A \in \Lc(F^p_{\alpha})$ and assume that $[B,A]$ is compact for all $B \in \Ac^p$. As $T_f \in \Ac^p$ for all $f \in L^{\infty}(\C^n)$, Theorem \ref{thm:compact_commutator_implies_BDO} implies that $A \in \Ac^p$. Let $x \in \beta\C^n \setminus \C^n$ and $C \in \Ac^p$. By Proposition \ref{prop:sequential_convergence}, we can choose a sequence $(x_n)_{n \in \N}$ with $\lim\limits \abs{x_n} = \infty$ such that
\[A_x = \slim\limits_{n \to \infty} W_{-x_n}AW_{x_n} \quad \text{and} \quad C = \slim\limits_{n \to \infty} W_{-x_n}BW_{x_n}\]
for some $B \in \Ac^p$. As $[B,A]$ is compact, it follows
\[0 = \slim\limits_{n \to \infty} W_{-x_n}[B,A]W_{x_n} = CA_x - A_xC,\]
that is, $A_x$ commutes with $C$. As $C \in \Ac^p$ was arbitrary and $\Kc(F^p_{\alpha}) \subset \Ac^p$, this means that $A_x$ commutes with every compact operator. That this can only happen if $A_x$ is a multiple of the identity is a well-known fact.

(b) $\Rightarrow$ (a): Conversely, if every limit operator of $A$ is a constant multiple of the identity, then
\[[B,A]_x = B_xA_x - A_xB_x = 0\]
for all $x \in \beta\C^n \setminus \C^n$ and $B \in \Ac^p$. The characterization of compact operators (Proposition \ref{prop:limit_operator_characterization}) thus implies $A \in \EssCom(\Ac^p)$.

(c) $\Rightarrow$ (b): As Toeplitz and compact operators are in $\Ac^p$ (Proposition \ref{prop:A^p_properties}) and every limit operator of a compact operator is $0$ (Proposition \ref{prop:limit_operator_characterization}), it suffices to check $(T_f)_x = f(x)I$ for all $x \in \beta\C^n \setminus \C^n$, $f \in \VO_b(\C^n)$. Using $W_{-z}T_fW_z = T_{f(\cdot +z)}$, this is easily verified (as in the proof of \cite[Theorem 33]{FuHa}).

(b) $\Rightarrow$ (c): We will first show that $\Bc(A)$ is in $\VO_b(\C^n)$. The argument is similar to \cite[Theorem 36]{HaggerBSD}. For completeness we sketch it here. Assume that $\Bc(A) \not\in \VO_b(\C^n)$. Then we can find an $\epsilon > 0$ and nets $(x_{\gamma})$, $(h_{\gamma})$ in $\C^n$ with $|h_{\gamma}| \leq 1$ such that $x_{\gamma} \to x$ for some $x \in \beta\C^n \setminus \C^n$, $h_{\gamma} \to h$ for some $h \in \C^n$ and 
\begin{equation} \label{eq:EssComBDO}
\abs{[\Bc(A)](x_{\gamma}+h_{\gamma}) - [\Bc(A)](x_{\gamma})} > \epsilon
\end{equation}
for all $\gamma$. By Proposition \ref{prop:limit_operators}, the net $(W_{-x_{\gamma}}AW_{x_{\gamma}})$ converges strongly to $A_x = \lambda_x I$ for some $\lambda_x \in \C$. Since $W_{x_{\gamma}}k_z = e^{-i\alpha\Imag(x_{\gamma} \cdot \bar{z})}k_{x_{\gamma}+z}$ for $z \in \C^n$, we have
\begin{equation} \label{eq:EssComBDO_2}
[\Bc(A)](x_{\gamma}+z) = [\Bc(W_{-x_{\gamma}}AW_{x_{\gamma}})](z) \to [\Bc(\lambda_x I)](z) = \lambda_x
\end{equation}
as $x_{\gamma} \to x$. It follows
\begin{align*}
&\abs{[\Bc(A)](x_{\gamma} + h_{\gamma}) - [\Bc(A)](x_{\gamma})}\\
&\qquad \qquad \qquad \qquad \qquad \leq \abs{[\Bc(A)](x_{\gamma} + h_{\gamma}) - [\Bc(A)](x_{\gamma} + h)} + \abs{[\Bc(A)](x_{\gamma} + h) - [\Bc(A)](x_{\gamma})} \to 0
\end{align*}
because of the Lipschitz continuity of $\Bc(A)$ (see Proposition \ref{prop:Berezin_Lipschitz}) and \eqref{eq:EssComBDO_2}. This is a contradiction to \eqref{eq:EssComBDO}. Hence $\Bc(A) \in \VO_b(\C^n)$. It remains to show that $A - T_{\Bc(A)}$ is compact. However, the computation \eqref{eq:EssComBDO_2} for $z = 0$ also shows that $A_x = [B(A)](x)I = (T_{\Bc(A)})_x$ (compare with the third part of this proof) for all $x \in \beta\C^n \setminus \C^n$. Using the compactness characterization again (Proposition \ref{prop:limit_operator_characterization}), we conclude that $A = T_{\Bc(A)} + K$ for some compact operator $K$.
\end{proof}

Combining Theorem \ref{thm:compact_commutator_implies_BDO} with Theorem \ref{thm:EssComBDO}, we get the following corollary:

\begin{cor} \label{cor:DoubleEssComBDO}
$\Ac^p$ is equal to its essential bicommutant.
\end{cor}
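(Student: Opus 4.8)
The plan is to establish the two inclusions $\Ac^p \subseteq \EssCom(\EssCom(\Ac^p))$ and $\EssCom(\EssCom(\Ac^p)) \subseteq \Ac^p$ separately, with essentially all of the content sitting in the second one. The first inclusion is formal and holds for any subset $X \subseteq \Lc(F^p_{\alpha})$ in place of $\Ac^p$: if $A \in X$ and $B \in \EssCom(X)$, then $[A,B] \in \Kc(F^p_{\alpha})$ by the definition of $\EssCom(X)$, hence $[B,A] \in \Kc(F^p_{\alpha})$, so $A \in \EssCom(\EssCom(X))$. This is already recorded in the text preceding Theorem \ref{thm:EssComBDO}, so I would simply quote it.

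For the reverse inclusion I would take $A \in \EssCom(\EssCom(\Ac^p))$ and feed it through the two theorems just proved. First, Theorem \ref{thm:EssComBDO} (the implication (c) $\Rightarrow$ (a), applied with $K = 0$) shows that $T_f \in \EssCom(\Ac^p)$ for every $f \in \VO_b(\C^n)$. Since $A$ lies in the essential commutant of $\EssCom(\Ac^p)$, this immediately gives $[T_f,A] \in \Kc(F^p_{\alpha})$ for all $f \in \VO_b(\C^n)$. But that is precisely the hypothesis of Theorem \ref{thm:compact_commutator_implies_BDO}, whose conclusion is $A \in \Ac^p$. Combining the two inclusions yields $\EssCom(\EssCom(\Ac^p)) = \Ac^p$.

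There is no genuine obstacle at this level; the difficulty has already been absorbed into Theorems \ref{thm:compact_commutator_implies_BDO} and \ref{thm:EssComBDO}, and the corollary only assembles them. The sole routine point worth a remark is that, although by Theorem \ref{thm:EssComBDO} a general element of $\EssCom(\Ac^p)$ has the form $T_f + K$ with $K$ compact, the compact summand is harmless — $[K,A]$ is automatically compact — so testing against the Toeplitz operators $T_f$ alone already captures all the information, which is exactly what the argument uses.
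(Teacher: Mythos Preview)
Your argument is correct and matches the paper's own reasoning: the corollary is stated immediately after Theorems \ref{thm:compact_commutator_implies_BDO} and \ref{thm:EssComBDO} with the remark that it follows by combining them, and your write-up simply makes that combination explicit. The observation that the compact summand $K$ in $T_f + K$ is irrelevant is a nice touch but not strictly needed, since testing against the $T_f$'s already suffices.
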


\section{Characterizations of the Toeplitz algebra} \label{sec:Characterizations}

Recall that an operator $A \in \Lc(F^p_{\alpha})$ is called sufficiently localized if there are constants $C > 0$, $\beta > 2n$ such that
\[\sp{Ak_z}{k_w} \leq \frac{C}{(1+|z-w|)^{\beta}}\]
for all $w,z \in \C^n$. As
\begin{align} \label{eq:Integral_operator}
(Af)(w)e^{-\frac{\alpha}{2}|w|^2} &= \left(\frac{\alpha}{\pi}\right)^n \int_{\C^n} f(z) \overline{(A^*k_w)(z)} e^{-\alpha|z|^2} \, \mathrm{d}z\notag\\
&= \left(\frac{\alpha}{\pi}\right)^n \int_{\C^n} f(z)\sp{Ak_z}{k_w} e^{-\frac{\alpha}{2}|z|^2} \, \mathrm{d}z,
\end{align}
Young's inequality implies
\[\norm{A}_p := \sup\limits_{f \in F^p_{\alpha}} \frac{\norm{Af}_p}{\norm{f}_p} \leq C\left(\frac{\alpha}{\pi}\right)^n \norm{\frac{1}{(1 + |\cdot|)^{\beta}}}_{L^1(\C^n)}\]
for every $p \in (1,\infty)$. In particular, \eqref{eq:Integral_operator} defines a bounded linear operator on every $F^p_{\alpha}$. Now, before we can summarize this paper in our main result, we need one last proposition.

\begin{prop} \label{prop:suff_localized}
Let $A$ be sufficiently localized. Then the map $\Psi \from \C^n \to \Lc(F^p_{\alpha})$, $\Psi(z) = W_{-z}AW_z$ is uniformly continuous in the norm topology.
\end{prop}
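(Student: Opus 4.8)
The plan is to reduce the norm-continuity of $\Psi(z) = W_{-z}AW_z$ to a uniform estimate on $\norm{\Psi(z+h) - \Psi(z)}$ that depends only on $\abs{h}$ (and not on $z$), using the explicit integral kernel representation of $A$ together with the Weyl product formula $W_zW_w = e^{-i\alpha\Imag(z\cdot\overline{w})}W_{z+w}$. First I would observe that, because $W_{-z}$ and $W_{z+h}$ differ only by a unimodular scalar from $W_{-z}W_{z+h}$-type products, we have $\Psi(z+h) - \Psi(z) = W_{-z}\bigl(W_{-h}' A W_h' - A\bigr)W_z$ up to harmless phase factors, where $W_h'$ again denotes a Weyl operator (I will need to track the phases carefully, but they do not affect norms). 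Since $W_{\pm z}$ are isometries on $F^p_\alpha$, this gives $\norm{\Psi(z+h) - \Psi(z)}_p \leq \norm{W_{-h}AW_h - A}_p$ plus a term coming from the phase discrepancy, which itself is $O(\abs{z}\,\abs{h})$ in the scalar but gets multiplied only by $\norm{A}$ — wait, that phase term is $z$-dependent and does not vanish uniformly, so the naive splitting is too crude. The correct route is instead to estimate $\norm{\Psi(z+h) - \Psi(z)}_p$ directly at the level of the integral kernel.

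So the key step is: by \eqref{eq:Integral_operator}, $\Psi(z)$ is the integral operator with kernel (against the measure $(\alpha/\pi)^n e^{-\frac{\alpha}{2}\abs{\zeta}^2}\,\mathrm{d}\zeta$) given by $\langle A k_{\zeta + z}}{k_{\xi+z}\rangle$ up to the unimodular Weyl phases $e^{-i\alpha\Imag(z\cdot\overline{\zeta})}$ and $e^{i\alpha\Imag(z\cdot\overline{\xi})}$; one computes $W_{-z}AW_z k_\zeta = e^{-i\alpha\Imag(z\cdot\overline{\zeta})} W_{-z} A k_{\zeta+z}$ and pairs against $k_\xi$. Because the sufficient-localization bound controls $\abs{\langle A k_{\zeta+z}}{k_{\xi+z}\rangle} \leq C(1+\abs{\zeta-\xi})^{-\beta}$ \emph{uniformly in $z$}, and since $\beta > 2n$ makes $(1+\abs{\cdot})^{-\beta}$ integrable, Young's inequality bounds $\norm{\Psi(z)}_p$ uniformly; the same reasoning applied to the kernel of $\Psi(z+h) - \Psi(z)$ reduces matters to showing that
\[
\sup_{z}\ \sup_{\xi}\ \int_{\C^n} \bigl\lvert \kappa_{z+h}(\zeta,\xi) - \kappa_z(\zeta,\xi)\bigr\rvert\, e^{-\frac{\alpha}{2}\abs{\zeta}^2}\,\mathrm{d}\zeta \longrightarrow 0 \quad\text{as } \abs{h}\to 0,
\]
where $\kappa_z$ is the (phase-adjusted) kernel of $\Psi(z)$. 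Here $\kappa_z(\zeta,\xi) = \Phi_z(\zeta,\xi)\langle A k_{\zeta+z}}{k_{\xi+z}\rangle$ with $\Phi_z$ unimodular; the difference splits into a piece where the kernel values $\langle A k_{\cdot+z+h}}{k_{\cdot+z+h}\rangle$ versus $\langle A k_{\cdot+z}}{k_{\cdot+z}\rangle$ are compared, and a piece where only the phase $\Phi_{z+h}$ versus $\Phi_z$ changes.

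The main obstacle — and the point where I expect to spend real effort — is handling the kernel-difference piece, because sufficient localization gives only a \emph{pointwise size} bound $\abs{\langle A k_u}{k_v\rangle}\leq C(1+\abs{u-v})^{-\beta}$, not a modulus-of-continuity estimate in $u$ and $v$. The standard trick to get around this: split the $\zeta$-integral into $\abs{\zeta - \xi} \leq R$ and $\abs{\zeta-\xi} > R$. On the far region, both kernels are $\leq C(1+R)^{-\beta}$ up to the shift by $\abs{h}\leq 1$, so the tail contributes $\lesssim \int_{\abs{\zeta-\xi}>R}(1+\abs{\zeta-\xi}-1)^{-\beta}\,\mathrm{d}\zeta$, which is small for $R$ large, uniformly in everything. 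On the near region (a fixed compact set once $R$ is fixed), I invoke the $*$-strong continuity from Proposition \ref{prop:limit_operators} — in fact the plain strong continuity of $z\mapsto W_{-z}AW_z$ on $\C^n$ — to conclude that $W_{-h}AW_h \to A$ strongly as $h\to 0$, hence $\langle (W_{-h}AW_h - A)k_\zeta}{k_\xi\rangle \to 0$; but strong continuity alone is not uniform in $\zeta, \xi$ over the compact set, so I instead use that $\{k_\zeta : \abs{\zeta}\leq R'\}$ is a norm-compact subset of $F^p_\alpha$ (the map $\zeta\mapsto k_\zeta$ is norm-continuous, shown just as in the proof of Proposition \ref{prop:Berezin_Lipschitz}), on which strong convergence is uniform, giving $\sup_{\abs{\zeta},\abs{\xi}\leq R'}\abs{\langle(W_{-h}AW_h-A)k_\zeta}{k_\xi\rangle}\to 0$. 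Since the near region has bounded measure, this finishes the kernel-difference piece. Finally the phase piece is easy: $\abs{\Phi_{z+h}(\zeta,\xi) - \Phi_z(\zeta,\xi)}$ is a difference of unimodular exponentials whose arguments differ by $\alpha\Imag(h\cdot(\overline{\zeta}-\overline{\xi}))$, but after translating the kernel variables to center at $z$ the surviving phase is $e^{-i\alpha\Imag(h\cdot\overline{\zeta})}e^{i\alpha\Imag(h\cdot\overline{\xi})}$ — independent of $z$ — so $\abs{\Phi_{z+h}-\Phi_z}\leq \alpha\abs{h}(\abs{\zeta}+\abs{\xi})$ is, against the rapidly-decaying weight and the localization bound, integrable with a bound that is $O(\abs{h})$ uniformly in $z$ and $\xi$. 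Combining the three estimates yields the uniform modulus of continuity and hence uniform continuity of $\Psi$ in the norm topology.
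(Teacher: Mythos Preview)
Your opening instinct was right and you abandoned it too quickly. From the product formula $W_zW_w = e^{-i\alpha\Im(z\cdot\overline{w})}W_{z+w}$ the paper records $W_{z+x} = e^{-i\alpha\Im(z\cdot\overline{x})}W_xW_z$ and $W_{-(z+x)} = e^{i\alpha\Im(z\cdot\overline{x})}W_{-z}W_{-x}$; the two scalar phases are complex conjugates, so
\[
\Psi(z+x) \;=\; W_{-(z+x)}AW_{z+x} \;=\; W_{-z}\bigl(W_{-x}AW_x\bigr)W_z
\]
exactly, with no leftover phase. Since $W_{\pm z}$ are isometries, $\norm{\Psi(z+x)-\Psi(z)} = \norm{W_{-x}AW_x - A}$, and uniform continuity reduces cleanly to continuity at $0$. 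The paper's proof begins with precisely this reduction.

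Even granting that reduction, your near-region step has a genuine gap. For the Schur/Young bound on $\norm{W_{-h}AW_h - A}$ you need
\[
\sup_{\xi\in\C^n}\ \int_{\abs{\zeta-\xi}\leq R}\bigl\lvert\sp{(W_{-h}AW_h-A)k_\zeta}{k_\xi}\bigr\rvert\,\mathrm{d}\zeta \;\longrightarrow\; 0,
\]
but the set $\{(\zeta,\xi):\abs{\zeta-\xi}\leq R\}$ is unbounded, so compactness of $\{k_\zeta:\abs{\zeta}\leq R'\}$ and ``strong convergence is uniform on compact sets'' do not by themselves give the required uniformity in $\xi$. This is repairable (write $\sp{C(W_hk_\eta-k_\eta)}{W_h\1}+\sp{Ck_\eta}{W_h\1-\1}$ with $C=W_{-\xi}AW_\xi$ and use $\norm{C}=\norm{A}$ together with the norm-continuity of $h\mapsto W_hk_\eta$ on the compact set $\abs{\eta}\leq R$), but that extra step is exactly what is missing, and it is where the ``size-only'' hypothesis has to be supplemented by some modulus of continuity.

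The paper's route is different and avoids the near/far split entirely. It rewrites $A$ as the operator-valued integral
\[
A \;=\; \int_{\C^n} M_{g_z}W_z\,\mathrm{d}z,\qquad g_z(w)=\left(\tfrac{\alpha}{\pi}\right)^n[\Bc(AW_{-z})](w).
\]
Sufficient localization gives $\norm{g_z}_\infty \leq (\alpha/\pi)^n C(1+\abs{z})^{-\beta}\in L^1(\C^n)$, and Proposition~\ref{prop:Berezin_Lipschitz} supplies a \emph{uniform} Lipschitz constant for every $g_z$. Conjugating the integrand by $W_{\pm x}$ yields $e^{-2i\alpha\Im(z\cdot\overline{x})}M_{g_z(\cdot+x)}W_z$; the difference with $M_{g_z}W_z$ splits into a Lipschitz piece $\norm{g_z(\cdot+x)-g_z}_\infty\leq\text{const}\cdot\abs{x}$ and a phase piece $\abs{e^{-2i\alpha\Im(z\cdot\overline{x})}-1}\norm{g_z}_\infty$, and dominated convergence against $\norm{g_z}_\infty$ finishes. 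The Berezin-Lipschitz estimate is precisely the uniform modulus of continuity that your argument was groping for.
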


\begin{proof}
Let $A$ be sufficiently localized. Because of the identities $W_{z+x} = e^{-i\alpha\Imag(z \cdot \overline{x})}W_xW_z$ and $W_{-(z+x)} = e^{i\alpha\Imag(z \cdot \overline{x})}W_{-z}W_{-x}$, it suffices to check the continuity at $z = 0$. By \eqref{eq:Integral_operator}, we have
\begin{align*}
(Af)(w) &= \left(\frac{\alpha}{\pi}\right)^n \int_{\C^n} f(z)\sp{Ak_z}{k_w} e^{\frac{\alpha}{2}(|w|^2 - |z|^2)} \, \mathrm{d}z\\
&= \left(\frac{\alpha}{\pi}\right)^n \int_{\C^n} f(w-z)\sp{Ak_{w-z}}{k_w} e^{\frac{\alpha}{2}(|w|^2 - |w-z|^2)} \, \mathrm{d}z\\
&= \left(\frac{\alpha}{\pi}\right)^n \int_{\C^n} (W_zf)(w) \sp{Ak_{w-z}}{k_w} e^{\frac{\alpha}{2}(|w|^2 - |w-z|^2 - 2w \cdot \overline{z} + |z|^2)} \, \mathrm{d}z\\
&= \left(\frac{\alpha}{\pi}\right)^n \int_{\C^n} (W_zf)(w) \sp{Ak_{w-z}}{k_w} e^{i\alpha\Imag(z \cdot \overline{w})} \, \mathrm{d}z\\
&= \left(\frac{\alpha}{\pi}\right)^n \int_{\C^n} (W_zf)(w) \sp{AW_{-z}k_w}{k_w} \, \mathrm{d}z\\
&= \left(\frac{\alpha}{\pi}\right)^n \int_{\C^n} (W_zf)(w) [\Bc(AW_{-z})](w) \, \mathrm{d}z.
\end{align*}
Define $g_z(w) := \left(\frac{\alpha}{\pi}\right)^n [\Bc(AW_{-z})](w)$. By Proposition \ref{prop:Berezin_Lipschitz}, the functions $g_z$ are Lipschitz continuous with Lipschitz constants bounded by $\tilde{C}\sqrt{\alpha}\left(\frac{\alpha}{\pi}\right)^n\norm{A}$ for some constant $\tilde{C} > 0$. Moreover, we have $\norm{g_z}_{\infty} \leq \left(\frac{\alpha}{\pi}\right)^n \frac{C}{(1+|z|)^{\beta}}$, that is, the function $z \mapsto \norm{g_z}_{\infty}$ is in $L^1(\C^n)$. It follows that we can write
\[A = \int_{\C^n} M_{g_z}W_z \, \mathrm{d}z\]
and the integral makes sense pointwise as either a Riemann or a Bochner integral on $F^p_{\alpha}$. Therefore,
\begin{align*}
\norm{W_{-x}AW_x - A} &\leq \int_{\C^n} \norm{W_{-x}M_{g_z}W_zW_x - M_{g_z}W_z} \, \mathrm{d}z\\
&= \int_{\C^n} \norm{W_{-x}M_{g_z}W_xW_{-x}W_zW_x - M_{g_z}W_z} \, \mathrm{d}z\\
&= \int_{\C^n} \norm{e^{-2i\alpha\Imag(z \cdot \bar{x})}M_{g_z(\cdot+x)}W_z - M_{g_z}W_z} \, \mathrm{d}z\\
&\leq \int_{\C^n} \norm{e^{-2i\alpha\Imag(z \cdot \bar{x})}(M_{g_z(\cdot+x)} - M_{g_z})} \, \mathrm{d}z + \int_{\C^n} \norm{(e^{-2i\alpha\Imag(z \cdot \bar{x})}-1)M_{g_z}} \, \mathrm{d}z
\end{align*}
for every $x \in \C^n$. By dominated convergence, it suffices to check that the integrands converge pointwise to $0$. For the first term we have
\[\norm{e^{-2i\alpha\Imag(z \cdot \bar{x})}(M_{g_z(\cdot+x)} - M_{g_z})} \leq \sup\limits_{w \in \C^n} |g_z(w+x)-g_z(w)| \leq \tilde{C}\sqrt{\alpha}\left(\frac{\alpha}{\pi}\right)^n\norm{A}|x|\]
by the Lipschitz continuity of $g_z$. For the second term we get
\[\norm{(e^{-2i\alpha\Imag(z \cdot \bar{x})}-1)M_{g_z}} \leq \left(\frac{\alpha}{\pi}\right)^n \frac{2C}{(1+|z|)^{\beta}}\abs{\sin\bigl{(}\alpha\Imag(z \cdot \bar{x})\bigr{)}}.\]
Hence, $\norm{W_{-x}AW_x - A} \to 0$ as $x \to 0$.
\end{proof}

We can now prove the main result of this paper, which shows that, after taking the closure, all the previously introduced algebras are actually the same. We use the abbreviation $\BUC(\C^n)$ for the set of bouded and uniformly continuous functions on $\C^n$.

\begin{thm} \label{thm:all_algebras_coincide}
Let $A \in \Lc(F^p_{\alpha})$. The following are equivalent:
\begin{itemize}
	\item[(a)] $A$ is contained in the closure of $\set{T_f : f \in \BUC(\C^n)}$.
	\item[(b)] $A$ is contained in the closure of $\set{T_f : f \in L^{\infty}(\C^n)}$.
	\item[(c)] $A \in \Tc^p$.
	\item[(d)] $A \in \Ac^p$.
	\item[(e)] $A$ is contained in the closure of $\Ac_{sl}$
	\item[(f)] $A$ is contained in the closure of $\Ac_{wl}$
	\item[(g)] The map $\Psi: z \mapsto W_{-z}AW_z$ is uniformly continuous in the norm topology.
	\item[(h)] $[T_f,A]$ is compact for all $f \in \VO_b(\C^n)$.
\end{itemize}
\end{thm}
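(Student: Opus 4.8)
The plan is to prove the eight statements equivalent through one cycle of implications, drawing on three groups of facts: the elementary inclusions between the operator classes involved, the essential-commutant results of Section~\ref{sec:Essential_Commutants}, and Fulsche's theorem \cite{Fulsche}. The easy half is almost automatic: (a)$\Rightarrow$(b) because $\BUC(\C^n) \subseteq L^{\infty}(\C^n)$, and (b)$\Rightarrow$(c) because $\Tc^p$ is a closed algebra containing every $T_f$, $f \in L^{\infty}(\C^n)$. For the chain (c)$\Rightarrow$(e)$\Rightarrow$(f)$\Rightarrow$(d) I would use the inclusions $\Tc^p \subseteq \overline{\Ac_{sl}} \subseteq \overline{\Ac_{wl}} \subseteq \Ac^p$. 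The only point that needs a computation is $\Tc^p \subseteq \overline{\Ac_{sl}}$: every $T_f$ with $f \in L^{\infty}(\C^n)$ is sufficiently localized, since
\[
\abs{\sp{T_f k_z}{k_w}} \leq \norm{f}_{\infty}\int_{\C^n}\abs{k_z(u)\overline{k_w(u)}}\,\mathrm{d}\mu_{\alpha}(u) = \norm{f}_{\infty}e^{-\frac{\alpha}{4}\abs{z-w}^2},
\]
which decays faster than any polynomial; combined with the known fact that $\overline{\Ac_{sl}}$ is a Banach algebra \cite{Isralowitz,XiZhe}, this yields $\Tc^p \subseteq \overline{\Ac_{sl}}$. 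The inclusions $\Ac_{sl} \subseteq \Ac_{wl}$ (a polynomial tail of order $\beta > 2n$ is integrable on $\C^n$) and $\Ac_{wl} \subseteq \Ac^p$ are standard \cite{BaFu,IsMiWi,WuZhe}, and passing to norm closures gives (e)$\Rightarrow$(f)$\Rightarrow$(d).

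Section~\ref{sec:Essential_Commutants} supplies the equivalence (d)$\Leftrightarrow$(h). Here (h)$\Rightarrow$(d) is exactly Theorem~\ref{thm:compact_commutator_implies_BDO}, while for (d)$\Rightarrow$(h) one notes, for $A \in \Ac^p$ and $f \in \VO_b(\C^n)$, that $[T_f,A] \in \Ac^p$ and that every limit operator $[T_f,A]_x = (T_f)_x A_x - A_x (T_f)_x$ vanishes because $(T_f)_x = f(x)I$ is central; Proposition~\ref{prop:limit_operator_characterization}(a) then forces $[T_f,A]$ to be compact. (This is the observation from part~(c)$\Rightarrow$(b) of the proof of Theorem~\ref{thm:EssComBDO}; equivalently, $\Ac^p = \EssCom(\set{T_f : f \in \VO_b(\C^n)})$.)

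It remains to reconnect the localized side to the closure of the $\BUC$-symbol Toeplitz operators, and this is where the last two ingredients enter. By Proposition~\ref{prop:suff_localized}, every sufficiently localized operator has a uniformly norm-continuous Weyl orbit $z \mapsto W_{-z}AW_z$, and since a uniform limit of such maps is again of that kind, the class of operators satisfying (g) is norm-closed; hence $\overline{\Ac_{sl}}$ is contained in it, which is (e)$\Rightarrow$(g). To close the cycle I would invoke Fulsche's theorem \cite{Fulsche}: on $F^p_{\alpha}$ the operators with norm-continuous Weyl orbit are precisely the elements of $\overline{\set{T_f : f \in \BUC(\C^n)}}$, which gives (g)$\Rightarrow$(a); and --- the real crux --- the same result pins down $\Tc^p$ tightly enough that the last remaining inclusion $\Ac^p \subseteq \Tc^p$ (equivalently $\overline{\Ac_{wl}} \subseteq \set{(g)}$) follows as well. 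Assembling the implications --- (a)$\Rightarrow$(b)$\Rightarrow$(c)$\Rightarrow$(e)$\Rightarrow$(g)$\Rightarrow$(a), together with (e)$\Rightarrow$(f)$\Rightarrow$(d)$\Leftrightarrow$(h) and the bridge just described --- places all eight statements in one equivalence class.

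The genuine obstacle is this last paragraph. The essential-commutant machinery of Section~\ref{sec:Essential_Commutants} only ever identifies $\Ac^p$, never $\Tc^p$: compactness of the commutators $[T_f,A]$ for $f \in \VO_b(\C^n)$ does not visibly entail that $A$ can be approximated by Toeplitz operators with no products. Bridging $\Ac^p$ (equivalently, the algebra of operators with norm-continuous Weyl orbit) to $\Tc^p$ is exactly where Xia's Hilbert-space arguments fail to generalize, and it is Fulsche's correspondence theory for $p$-Fock spaces that repairs this; the delicate part of the write-up is thus to cite that theorem in precisely the form required for all $p \in (1,\infty)$ and to verify that its hypotheses are met. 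The remaining ingredients --- the kernel bound above, integrability of polynomial tails, the algebra properties of the localized classes, and the results of Section~\ref{sec:Essential_Commutants} --- are routine or already established.
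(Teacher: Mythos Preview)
Your implications are largely correct and closely mirror the paper's proof, but there is one genuine gap: you never close the cycle back from~(d). Your chain reads
\[
(a)\Rightarrow(b)\Rightarrow(c)\Rightarrow(e)\Rightarrow(f)\Rightarrow(d)\Leftrightarrow(h), \qquad (e)\Rightarrow(g)\Rightarrow(a),
\]
which leaves (d), (f), and (h) dangling: nothing you have written gets from $\Ac^p$ back to $\overline{\Ac_{sl}}$ or to property~(g). You correctly flag this as the ``genuine obstacle'' and attribute the fix to Fulsche, but Fulsche's theorem \cite{Fulsche} is precisely the implication (g)$\Rightarrow$(a); it says nothing about (d)$\Rightarrow$(g). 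Your parenthetical ``$\Ac^p$ (equivalently, the algebra of operators with norm-continuous Weyl orbit)'' presupposes exactly the equivalence that is still missing.

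The paper closes this gap differently: it routes (c)$\Rightarrow$(d)$\Rightarrow$(e) rather than your (c)$\Rightarrow$(e), and the step (d)$\Rightarrow$(e) is a separate citation, namely \cite[Theorem~4.20]{BaFu}, which shows that every band-dominated operator on $F^p_{\alpha}$ lies in the closure of the sufficiently localized ones. With that single additional arrow the whole diagram becomes connected, since then (d)$\Rightarrow$(e)$\Rightarrow$(g)$\Rightarrow$(a). You may equally well keep your direct (c)$\Rightarrow$(e) and simply add (d)$\Rightarrow$(e) from \cite{BaFu} afterwards; either way, the missing ingredient is that inclusion from Bauer--Fulsche, not Fulsche's correspondence theorem.
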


\begin{proof}
As all the work has been done above or in previous work, we only need to connect the statements by arrows. ``(a) $\Longrightarrow$ (b) $\Longrightarrow$ (c)'' is clear. ``(c) $\Longrightarrow$ (d)'' is \cite[Theorem 15]{FuHa}. ``(d) $\Longrightarrow$ (e)'' follows as in \cite[Theorem 4.20]{BaFu}. ``(e) $\Longrightarrow$ (f)'' is again clear and ``(f) $\Longrightarrow$ (d)'' follows from \cite[Proposition 3.5]{IsMiWi}. ``(e) $\Longrightarrow$ (g)'' is shown in Proposition \ref{prop:suff_localized} above. ``(g) $\Longrightarrow$ (a)'' was shown in \cite[Theorem 3.1]{Fulsche}. The equivalence ``(d) $\Longleftrightarrow$ (h)'' follows from Theorem \ref{thm:compact_commutator_implies_BDO} and Theorem \ref{thm:EssComBDO}.
\end{proof}

Of course, Theorem \ref{thm:all_algebras_coincide} has many direct corollaries as many results have been proven for these classes of operators. As they turn out to be all the same, many results directly carry over. Let us highlight just one of them here, which follows from Proposition \ref{prop:A^p_properties}. Alternatively, one can also deduce it directly from Theorem \ref{thm:all_algebras_coincide} (h). If $p = 2$, the inverse closedness is of course trivial as these algebras are $C^*$ in that case.

\begin{cor}
$\overline{\Ac_{sl}}$, $\overline{\Ac_{wl}}$ and $\Tc^p$ are inverse closed Banach algebras.
\end{cor}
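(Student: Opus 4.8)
The plan is to deduce the corollary directly from what has already been established, namely Theorem \ref{thm:all_algebras_coincide}, which identifies $\overline{\Ac_{sl}}$, $\overline{\Ac_{wl}}$ and $\Tc^p$ all with the single algebra $\Ac^p$, together with Proposition \ref{prop:A^p_properties}, which asserts that $\Ac^p$ is an inverse closed Banach algebra. Once these identifications are in place there is essentially nothing left to do, so the proof will be a short two-step argument.

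First I would invoke Theorem \ref{thm:all_algebras_coincide}: the equivalences (c) $\Longleftrightarrow$ (d) $\Longleftrightarrow$ (e) $\Longleftrightarrow$ (f) show that $\overline{\Ac_{sl}} = \overline{\Ac_{wl}} = \Tc^p = \Ac^p$ as sets of operators on $F^p_{\alpha}$. (Here one should note that the closures appearing in (e) and (f) of the theorem are exactly $\overline{\Ac_{sl}}$ and $\overline{\Ac_{wl}}$ in the notation of the introduction.) Since these are literally the same subset of $\Lc(F^p_{\alpha})$, any algebraic or topological property of one is a property of all of them.

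Second, I would quote Proposition \ref{prop:A^p_properties}, which states precisely that $\Ac^p$ is an inverse closed Banach algebra. Combining this with the identification from the first step immediately gives that $\overline{\Ac_{sl}}$, $\overline{\Ac_{wl}}$ and $\Tc^p$ are inverse closed Banach algebras, which is the assertion of the corollary. As the remark preceding the corollary points out, one could alternatively argue directly from characterization (h): the set of $A$ with $[T_f,A] \in \Kc(F^p_{\alpha})$ for all $f \in \VO_b(\C^n)$ is manifestly a Banach algebra (it is norm-closed, and the commutator is a derivation so the Leibniz rule keeps products inside), and inverse closedness follows because if $A$ is invertible in $\Lc(F^p_\alpha)$ then $[T_f,A^{-1}] = -A^{-1}[T_f,A]A^{-1}$ is compact whenever $[T_f,A]$ is.

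There is no real obstacle here; the only thing to be careful about is bookkeeping, i.e. making sure the closures named in the statement of the corollary match the closures used in Theorem \ref{thm:all_algebras_coincide} and that the cited Proposition \ref{prop:A^p_properties} indeed delivers the full strength needed (Banach algebra structure and inverse closedness in $\Lc(F^p_\alpha)$, not merely Fredholm inverse closedness). Since Proposition \ref{prop:A^p_properties} is stated with exactly the words ``inverse closed Banach algebra'', this is immediate.

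\begin{proof}
By Theorem \ref{thm:all_algebras_coincide}, the equivalences of (c)--(f) show that $\overline{\Ac_{sl}} = \overline{\Ac_{wl}} = \Tc^p = \Ac^p$ as subsets of $\Lc(F^p_{\alpha})$. By Proposition \ref{prop:A^p_properties}, $\Ac^p$ is an inverse closed Banach algebra. The assertion follows.
\end{proof}
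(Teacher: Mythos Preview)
Your proposal is correct and matches the paper's own reasoning exactly: the paper simply notes that the corollary follows from Proposition \ref{prop:A^p_properties} (via the identification $\overline{\Ac_{sl}} = \overline{\Ac_{wl}} = \Tc^p = \Ac^p$ from Theorem \ref{thm:all_algebras_coincide}), and also mentions the alternative direct argument from characterization (h) that you spell out.
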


Theorem \ref{thm:all_algebras_coincide} can also be used to give a different proof of ``(c) implies (a)'' in Theorem \ref{thm:EssComBDO}. Indeed, for $f,g \in L^{\infty}(\C^n)$ the following algebraic identity holds:
\[[T_f,T_g] = H_{\bar g}^*H_f - H_{\bar f}^*H_g,\]
where $H_{\bar f}^*$ is the adjoint of $H_{\bar f} \from F^q_{\alpha} \to L^q_{\alpha}$ as before. Now if $f \in \VO_b(\C^n)$, then $H_f$ and $H_{\bar f}$ are compact (see \cite[Theorem 1.1]{Lv}, for example), so that $[T_f,T_g]$ is compact as well. Hence, $T_f$ essentially commutes with all Toeplitz operators and therefore, by standard properties of the commutator, $T_f \in \EssCom(\Tc^p)$.

In fact, the boundedness of $g$ is not necessary in the above. We only needed the boundedness of $H_g$ and $H_{\bar g}$. As this is exactly the case when $g \in \BMO(\C^n)$ (see \cite[Theorem 1.1]{Lv}, for example), we get the following corollary of Theorem \ref{thm:compact_commutator_implies_BDO} and Theorem \ref{thm:all_algebras_coincide}.

\begin{cor}
If $g \in \BMO(\C^n)$ and $T_g \in \Lc(F^p_{\alpha})$, then $T_g$ is contained in the Toeplitz algebra.
\end{cor}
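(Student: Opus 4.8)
The plan is to reduce the statement to an application of Theorem~\ref{thm:compact_commutator_implies_BDO} together with the characterization in Theorem~\ref{thm:all_algebras_coincide}. Assume $g \in \BMO(\C^n)$ and that the Toeplitz operator $T_g$ defines a bounded operator on $F^p_{\alpha}$ (equivalently, on $F^q_{\alpha}$, where $\frac{1}{p}+\frac{1}{q}=1$, by duality). By Theorem~\ref{thm:compact_commutator_implies_BDO}, to conclude $T_g \in \Ac^p = \Tc^p$ it suffices to show that $[T_f, T_g]$ is compact for every $f \in \VO_b(\C^n)$.

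First I would record the algebraic commutator identity
\[
[T_f, T_g] = H_{\bar g}^* H_f - H_{\bar f}^* H_g,
\]
which holds for any pair of symbols for which all four Hankel operators appearing on the right are bounded; here $H_f = (I-P)M_f \from F^p_{\alpha} \to L^p_{\alpha}$ and $H_{\bar g}^* = PM_g(I-P) \from L^p_{\alpha} \to F^p_{\alpha}$ as in the text preceding Theorem~\ref{thm:compact_commutator_implies_BDO}. Since $f \in \VO_b(\C^n)$, the operators $H_f$ and $H_{\bar f}$ are compact by \cite[Theorem~1.1]{Lv}; since $g \in \BMO(\C^n)$, the operators $H_g$ and $H_{\bar g}$ are bounded (again by \cite[Theorem~1.1]{Lv}, which is precisely the statement that boundedness of the Hankel operators with symbol $g$ is equivalent to $g \in \BMO$). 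Hence both terms on the right-hand side are products of a bounded operator with a compact one, so $[T_f, T_g] \in \Kc(F^p_{\alpha})$. One small point to verify here is that the identity above, which is immediate on a dense set of nice functions, extends to the stated generality; this follows by a routine density and boundedness argument, since all operators involved are bounded and the defining relation $T_h = P M_h$ is respected under the relevant limits.

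Having established that $[T_f, T_g]$ is compact for all $f \in \VO_b(\C^n)$, Theorem~\ref{thm:compact_commutator_implies_BDO} yields $T_g \in \Ac^p$, and then the equivalence of (d) and (c) in Theorem~\ref{thm:all_algebras_coincide} gives $T_g \in \Tc^p$, i.e.\ $T_g$ lies in the Toeplitz algebra. I do not anticipate a genuine obstacle here: the only thing requiring a moment's care is the justification of the Hankel commutator identity for unbounded-symbol Toeplitz operators, and even that is standard once one works on the common dense domain and uses that $T_g$ is assumed bounded. The corollary is therefore essentially a packaging of the two main theorems with the Hankel operator characterization of $\BMO$ from \cite{Lv}.
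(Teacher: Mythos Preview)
Your proposal is correct and follows essentially the same route as the paper: use the Hankel commutator identity $[T_f,T_g] = H_{\bar g}^*H_f - H_{\bar f}^*H_g$, invoke \cite{Lv} for compactness of $H_f,H_{\bar f}$ when $f\in\VO_b(\C^n)$ and boundedness of $H_g,H_{\bar g}$ when $g\in\BMO(\C^n)$, and then apply Theorem~\ref{thm:compact_commutator_implies_BDO} together with Theorem~\ref{thm:all_algebras_coincide}. The paper's argument is exactly this, only stated more tersely in the paragraph preceding the corollary.
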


We note that it can also be verified directly that $T_g$ is sufficiently localized (similarly as it was done  for $p = 2$ in \cite[Lemma 4.11]{BaFu}).

\bigskip

\noindent
Raffael Hagger\\
Department of Mathematics and Statistics\\
University of Reading\\
Whiteknights Campus\\
Reading RG6 6AX\\
UNITED KINGDOM\\
r.t.hagger@reading.ac.uk


\begin{thebibliography}{}

\bibitem{BaFu} W.~Bauer, R.~Fulsche: \emph{Berger-Coburn Theorem, Localized Operators, and the Toeplitz Algebra}, Preprint on arXiv: 1905.12246.

\bibitem{BaIs} W.~Bauer, J.~Isralowitz: \emph{Compactness Characterization of Operators in the Toeplitz Algebra of the Fock Space $F^p_{\alpha}$}, J.~Funct.~Anal.~263 (2012), 1323-1355.

\bibitem{BeDra} G.~Bell, A.~Dranishnikov: \emph{Asymptotic Dimension in B\k{e}dlewo}, Topol.~P.~38 (2011), 209-236.

\bibitem{Coburn} L.~Coburn: \emph{A Lipschitz Estimate for Berezin's Operator Calculus}, P.~Am. Math.~Soc.~133, no. 1 (2005), 127-131.

\bibitem{Fulsche} R.~Fulsche: \emph{Correspondence Theory on $p$-Fock Spaces with Applications to Toeplitz Algebras}, Preprint on arXiv: 1911.12668.

\bibitem{FuHa} R.~Fulsche, R.~Hagger: \emph{Fredholmness of Toeplitz Operators on the Fock space}, Complex Anal.~Oper.~Theory 13 (2019), 375-403.

\bibitem{HaggerUnitBall} R.~Hagger, \emph{The Essential Spectrum of Toeplitz Operators on the Unit Ball}, Integr.~Equ.~Oper. Theory, Vol.~89, No.~4 (2017), 519-556.

\bibitem{HaggerBSD} R.~Hagger: \emph{Limit Operators, Compactness and Essential Spectra on Bounded Symmetric Domains}, J.~Math.~Anal.~Appl.~470, no.~1 (2019), 470-499.

\bibitem{HaSe} R.~Hagger, C.~Seifert: \emph{Limit Operator Techniques on General Metric Measure Spaces of Bounded Geometry}, Preprint on arXiv: 1908.01985.

\bibitem{HaVi} R.~Hagger, J.~Virtanen: \emph{Compact Hankel Operators with Bounded Symbols}, Preprint on arXiv: 1906.09901.

\bibitem{Isralowitz} J.~Isralowitz: \emph{Compactness and Essential Norm Properties of Operators on Generalized Fock Spaces}, J.~Operator Theory~73, no.~2 (2015), 281-314.

\bibitem{IsMiWi} J.~Isralowitz, M.~Mitkovski, B.~Wick: \emph{Localization and Compactness in Bergman and Fock Spaces}, Indiana Univ.~Math.~J.~64 (2015), 1553-1573.

\bibitem{Lindner} M.~Lindner: \emph{Infinite Matrices and their Finite Sections}, Birkh\"auser Verlag, Basel, Boston, Berlin, 2006.

\bibitem{Lv} X.~Lv: \emph{Hankel Operators on Fock Spaces $F^p(\varphi)$}, Complex Variables and Elliptic Equations~64, no.~9 (2019), 1522-1533.

\bibitem{MuXi} P.~Muhly, J.~Xia: \emph{On Automorphisms of the Toeplitz Algebra}, Amer.~J.~Math.~122 (2000), 1121-1138.

\bibitem{RaRoSi} V.~Rabinovich, S.~Roch, B.~Silbermann: \emph{Limit Operators and their Applications in Operator Theory}, Operator Theory Advances and Applications~150, Birkh\"auser, Basel, 2004.

\bibitem{SaZo} M.~Sadeghi, N.~Zorboska: \emph{Localization, Carleson Measure and BMO Toeplitz Operators on the Bergman Space}, J.~Math.~Anal.~Appl.~485, no.~2 (2020), 123829.

\bibitem{WuZhe} S.~Wu, D.~Zheng: \emph{Toeplitz Algebra on the Fock Space}, Preprint on arXiv: 1907.00889.

\bibitem{Xia2015} J.~Xia: \emph{Localization and the Toeplitz Algebra on the Bergman Space}, J.~Funct.~Anal.~269 (2015), 781-814.

\bibitem{Xia2018} J.~Xia: \emph{A Double Commutant Relation in the Calkin Algebra on the Bergman Space}, J.~Funct.~Anal.~274 (2018), 1631-1656.

\bibitem{XiZhe} J.~Xia, D.~Zheng: \emph{Localization and Berezin Transform on the Fock space}, J.~Funct.~Anal.~264, no.~1 (2013), 97-117.

\end{thebibliography}
\end{document}